\documentclass[11pt,a4paper]{article}
\usepackage[T1]{fontenc}
\usepackage[utf8]{inputenc}
\usepackage{lmodern}
\usepackage[margin=25mm]{geometry}
\usepackage{amsmath,amssymb,amsthm,mathtools,bm}
\usepackage{graphicx,booktabs,enumerate,placeins,microtype}
\usepackage[numbers,sort&compress]{natbib}
\usepackage[font=small,labelfont=bf,labelsep=space]{caption}
\usepackage[hidelinks]{hyperref}
\hypersetup{pdftitle={Lifetime-unified quadratic and cubic approximations of fluorescence peak time for a time-domain diffuse-light response},pdfauthor={Junyong Eom}}
\graphicspath{{./}}
\newtheorem{theorem}{Theorem}
\newtheorem{proposition}{Proposition}

\theoremstyle{remark}

\newcommand{\erfc}{\operatorname{erfc}}
\newcommand{\erfcx}{\operatorname{erfcx}}
\newcommand{\tpeak}{t_{\mathrm{peak}}}
\newcommand{\Kop}{\mathcal{K}_{\rho}}
\newcommand{\R}{\mathbb{R}}
\newcommand{\doi}[1]{\url{https://doi.org/#1}}

\title{Lifetime-unified quadratic and cubic approximations of fluorescence peak time for a time-domain diffuse-light response}
\author{Junyong Eom\\[3pt]
\small Graduate School of Science and Technology, Gunma University\\
\small Maebashi, Gunma, Japan\\
\small Corresponding author: \href{mailto:eom@gunma-u.ac.jp}{eom@gunma-u.ac.jp}\\
\small ORCID: \href{https://orcid.org/0000-0002-2749-3322}{0000-0002-2749-3322}}
\date{}

\begin{document}
\raggedbottom
\maketitle
\begin{abstract}
We study the temporal maximum of a boundary response generated by a coupled diffusion system in a half-space with Robin boundary conditions and a point fluorophore. The fluorescence lifetime enters through a one-sided exponential time convolution. Previous peak-time approximations were derived under separate short- and long-lifetime scalings. We construct a single reduced peak-time formula for every positive ratio of lifetime to the intrinsic width of the transport peak, thereby describing the transition between these regimes without switching formulas. Starting from a known large-distance approximation of the zero-lifetime response, we normalize at its unique maximum and approximate its logarithm quadratically. The resulting scalar mode equation has a unique solution, expressed through the inverse scaled complementary error function. Its endpoint expansions recover the leading structures of the earlier approximations in the corresponding joint large-distance regimes. A cubic correction accounts for the leading local asymmetry. Under a stated bound on the normalized cubic coefficient, we prove uniqueness of its positive mode and agreement with the exact reduced-profile peak expansion through second order in the lifetime. A conditional peak-location stability estimate is combined with a decomposition into transport-profile and local truncation errors. Numerical comparisons with the full diffusion response support the accuracy of the construction. The lifetime-unified formula is distinguished from the fixed-geometry, infinite-lifetime limit, in which the transport tail yields a different peak-time growth law.
\end{abstract}
\noindent\textbf{Keywords:} Coupled diffusion equations; peak time; fluorescence lifetime; asymptotic approximation; exponential convolution; inverse problems.

\smallskip
\noindent\textbf{Mathematics Subject Classification (2020):} 35K45, 34E05, 35R30.

\smallskip
\noindent\textbf{Acknowledgments.} This work was supported by the Japan
Society for the Promotion of Science (JSPS), KAKENHI Grant Number JP25K17291.

\section{Introduction}
\label{sec:introduction}

The time at which a boundary trace of a linear parabolic system reaches
its maximum is a nonlinear feature of the underlying evolution. In
time-domain fluorescence diffuse optical tomography (FDOT), this \emph{peak time}
is used as reduced data for locating a fluorescent target: a short boundary
pulse diffuses to the target, fluorescence is emitted after a delay, and the
emitted light diffuses to a boundary detector. The measured temporal trace
therefore combines spatial transport and fluorescence decay
\cite{Arridge1999,ArridgeSchotland2009,Jiang2022FMT,ZhangEtAl2022}.
Unlike its amplitude, the location of a temporal maximum is unchanged by a
positive multiplicative factor. This makes peak time useful when source
strength, detector gain, or fluorophore concentration is not known precisely.
Experimental and analytical studies have related this observable to target
depth \cite{HallEtAl2004,HanHall2008,LaidevantEtAl2007}.

The mathematical question addressed here is how the fluorescence lifetime
changes the location of that maximum. Earlier analyses describe the peak
in separate short- and long-lifetime regimes. Our aim is a single reduced
peak-time formula that does not require choosing between those regimes
and that retains their leading asymptotic structures at the appropriate
endpoints. We first introduce the parabolic model and its zero-lifetime
response, so that both the earlier approximations and the transition
problem can be stated directly in terms of the governing equations.

\subsection{The parabolic model and the peak-time observable}
\label{sec:model}

Let $\Omega=\R^3_+=\{x=(x_1,x_2,x_3):x_3>0\}$, let
$x_s,x_d\in\partial\Omega$ denote the source and detector, and let
$x_c\in\Omega$ be the location of a point fluorophore. We use constant,
identical optical coefficients for excitation and emission. The unknown
fluorescence strength is represented by
$\mu_f(x)=c\delta(x-x_c)$, $c>0$. The excitation field $u_e$ and emission
field $U_m$ solve the coupled diffusion system
\cite{ChenEtAl2026IPI,ChenEtAl2026Global}
\begin{equation}
 \left\{
 \begin{aligned}
  (v^{-1}\partial_t-D\Delta+\mu_a)u_e&=0
       &&\text{in }\Omega\times(0,\infty),\\
  u_e(\cdot,0)&=0
       &&\text{in }\Omega,\\
  \partial_\nu u_e+\beta u_e&=\delta(x-x_s)\delta(t)
       &&\text{on }\partial\Omega\times(0,\infty),
 \end{aligned}
 \right.
 \label{eq:excitation}
\end{equation}
and
\begin{equation}
 \left\{
 \begin{aligned}
  (v^{-1}\partial_t-D\Delta+\mu_a)U_m
      &=\mu_f(x)(f_\ell*u_e)(x,t)
       &&\text{in }\Omega\times(0,\infty),\\
  U_m(\cdot,0)&=0
       &&\text{in }\Omega,\\
  \partial_\nu U_m+\beta U_m&=0
       &&\text{on }\partial\Omega\times(0,\infty).
 \end{aligned}
 \right.
 \label{eq:emission}
\end{equation}
Here $\nu$ is the exterior unit normal, $v>0$ is the speed of light in the
medium, $D>0$ is the scaled diffusion coefficient, $\mu_a>0$ is the
background absorption coefficient, and $\beta>0$ is the Robin boundary-loss
parameter. The convolution is in time, with
\[
 f_\ell(t)=\ell^{-1}e^{-t/\ell}\bm{1}_{t\geq0},\qquad
 (f_\ell*u_e)(x,t)=\int_0^t f_\ell(t-s)u_e(x,s)\,ds,
 \qquad \ell>0.
\]
The zero-lifetime model is defined by replacing $f_\ell*u_e$ with $u_e$.
Thus zero lifetime means instantaneous emission at the fluorophore; it does
not remove either diffusion process.

For a fixed source--detector pair, write
$U_\ell(t)=U_m(x_d,t;x_s)$. The measured feature is
\begin{equation}
 \tpeak(x_s,x_d)
 :=\underset{t>0}{\operatorname{arg\,max}}\;U_\ell(t).
 \label{eq:peak-definition}
\end{equation}
All positive time-independent multiplicative factors, including $c$, cancel
from this definition. Previous work has used peak times to construct local,
global, and direct target-location methods
\cite{EomEtAl2023JMP,EomEtAl2024DIE,ChenEtAl2023CAC,ChenEtAl2026Global,ChenEtAl2026IPI}.
This reduction differs from using temporal moments
\cite{Arridge1995Moments,DucrosEtAl2009PartI,DucrosEtAl2009PartII} or fitting
the full waveform \cite{EomKimSon2026}. The present paper concerns the
forward prediction of the peak time that such inverse methods use, rather
than a new reconstruction algorithm. The uniqueness statements proved
below concern the reduced transport response and the proposed local
models, not a general uniqueness theorem for every full diffusion trace.

\subsection{The zero-lifetime response and its transport approximation}

Let $u_m(t)=U_0(t)$ denote the zero-lifetime boundary response. The half-space
Green-function representation gives, up to a positive time-independent
factor,
\cite{LamEtAl2005,ChenEtAl2023CAC,ChenEtAl2026Global}
\begin{align}
 u_m(t)
 &=e^{-v\mu_at}\int_0^t [(t-s)s]^{-3/2}
 \exp\!\left[-\frac{|x_d-x_c|^2}{4vD(t-s)}
              -\frac{|x_s-x_c|^2}{4vDs}\right] \notag\\
 &\hspace{2.2cm}\times
 \widehat K(x_{c3};t-s)\widehat K(x_{c3};s)\,ds,
 \qquad t>0,
 \label{eq:full-zero-response}
\end{align}
Here and below, a common positive normalization of $u_m$ and $U_\ell$
is suppressed, since it does not affect peak time. The target depth is
$x_{c3}$, and
\begin{equation}
 \widehat K(z;t)
 =1-\beta\sqrt{\pi vDt}\,
 \erfcx\!\left(\frac{z+2\beta vDt}{\sqrt{4vDt}}\right).
 \label{eq:boundary-factor}
\end{equation}
We use the complementary error function and its scaled version,
\[
 \erfc(\xi)=\frac{2}{\sqrt\pi}\int_\xi^\infty e^{-s^2}\,ds,
 \qquad \erfcx(\xi)=e^{\xi^2}\erfc(\xi),\qquad \xi\in\R.
\]
The integral in \eqref{eq:full-zero-response} accounts for the two diffusion
legs and the Robin boundary factor. A nonzero fluorescence lifetime is then
introduced exactly by
\begin{equation}
 U_\ell(t)=\frac1\ell\int_0^t e^{-(t-s)/\ell}u_m(s)\,ds,
 \qquad \ell>0.
 \label{eq:lifetime-convolution}
\end{equation}
In particular, differentiation gives
$\ell U_\ell'(t)+U_\ell(t)=u_m(t)$, so a stationary point satisfies
$U_\ell(t)=u_m(t)$. This identity does not require a small- or large-lifetime
assumption. Equations~\eqref{eq:full-zero-response} and
\eqref{eq:lifetime-convolution} provide the full numerical reference in this
paper.

For analytical approximation, introduce
\begin{equation}
 \lambda^2=\frac{|x_d-x_c|^2+|x_s-x_c|^2}{2vD},
 \qquad k=v\mu_a,\qquad q=\beta vD.
 \label{eq:lambda-k-q}
\end{equation}
Under the large-distance and balanced-path assumptions established in
\cite{EomEtAl2024DIE,ChenEtAl2023CAC}, the zero-lifetime response has the
leading transport profile
\begin{equation}
 u_m^a(t)=C_0e^{-kt}t^{-3/2}e^{-\lambda^2/t}
 \left(\frac{x_{c3}}{x_{c3}+qt}\right)^2,
 \qquad t>0,
 \label{eq:asymptotic-profile}
\end{equation}
where $C_0>0$ does not affect peak time. The balance assumption limits the
asymmetry between the source--target and detector--target paths; its precise
form is recalled in Section~\ref{sec:local-geometry}. The reduced profile
has a unique positive maximizer $t_0$
\cite[Theorem~3.1]{ChenEtAl2023CAC}. It is this transport peak, rather than
a fitted Gaussian, that determines the center and scale of our construction.
Dimensional quantities are retained for interpretation. Whenever an
asymptotic order in distance is stated, it is understood in the
nondimensional variables
$\widehat\lambda=\sqrt{v/D}\,\lambda$, $\widehat t=vt/D$, and
$\widehat\ell=v\ell/D$; hats are displayed where needed.

\subsection{Earlier endpoint formulas and the transition problem}

The zero-lifetime approximation was subsequently extended to nonzero
lifetime in two related directions. The analysis in
\cite{ChenEtAl2026Global} derived a long-lifetime approximate peak equation
and used it in global reconstruction for point targets. The direct
inversion study \cite{ChenEtAl2026IPI} derived separate short- and
long-lifetime expansions for approximate peak times. To make the distinction
from the exact response explicit, denote those earlier approximations by
$t_{\mathrm{short}}^p$ and $t_{\mathrm{long}}^p$. Selected terms of
\cite[Theorems~2.3 and~2.7]{ChenEtAl2026IPI} have the form
\begin{equation}
 \begin{aligned}
 t_{\mathrm{short}}^p
 &=\frac{\lambda}{\sqrt{k}}+b_0+\ell+\cdots,
      &&\text{short-lifetime scaling},\\[1mm]
 t_{\mathrm{long}}^p
 &=\frac{\lambda}{\sqrt{\kappa}}
   +\frac{\lambda^{1/2}}{\kappa^{3/4}}
    \sqrt{\log\!\left(\frac{\ell\kappa^{3/4}}
                         {\sqrt\pi\lambda^{1/2}}\right)}+\cdots,
      &&\text{long-lifetime scaling},
 \end{aligned}
 \label{eq:previous-endpoint-formulas}
\end{equation}
where
\[
 b_0=-\frac{7}{4k}
       +\frac1k\frac{\sqrt{k}}{\sqrt{k}+\beta\sqrt{vD}},
 \qquad \kappa=k-\ell^{-1}.
\]
The short-lifetime expansion uses a small dimensionless lifetime. The
long-lifetime construction assumes
$\kappa>0$ and $\ell>\sqrt\pi\,\kappa^{-3/4}\lambda^{1/2}$.
Both lines are large-distance expansions in their respective admissible
lifetime scalings, with the hypotheses and remainders specified in the
cited results. The dots in \eqref{eq:previous-endpoint-formulas} are not a
common uniform remainder. In particular, the second line is not the
fixed-geometry limit $\ell\to\infty$.

The leading short-lifetime delay is $\ell$, whereas the long-lifetime
approximation involves a different effective rate and a square-root
logarithmic correction. These descriptions provide the two endpoints, but
do not themselves give one reduced formula through the intermediate
regime. The question is therefore not whether another endpoint term can be
calculated, but whether the lifetime dependence can be retained within a
single mode equation while the transport response is approximated.
\emph{The purpose of this paper is to construct and analyze such a
lifetime-unified peak-time approximation.}

\subsection{A lifetime-unified construction and the main results}

We retain the exponential convolution and approximate the transport profile
near its own maximum. The intrinsic temporal width and the normalized
lifetime are
\[
 \sigma=\left[-\left.\frac{d^2}{dt^2}\log u_m^a(t)
                    \right|_{t=t_0}\right]^{-1/2},
 \qquad \rho=\frac{\ell}{\sigma}.
\]
The width is well defined because the reduced transport maximum is
nondegenerate. With $y=(t-t_0)/\sigma$, the leading local logarithmic profile
is $-y^2/2$. Convolution of this Gaussian profile with the exponential
kernel yields an exponentially modified Gaussian
\cite{KalambetEtAl2011}. Its mode gives
\[
 \tpeak^{(2)}=t_0+\sigma\left[
 \rho^{-1}-\sqrt2\,\erfcx^{-1}\!\left(\sqrt{2/\pi}\,\rho\right)
 \right],\qquad \rho>0.
\]
Thus one formula covers short lifetime, the transition $\rho=O(1)$, and long
lifetime without regime-dependent switching. The small- and large-$\rho$
expansions recover the leading lifetime-dependent structures in
\eqref{eq:previous-endpoint-formulas} under the corresponding joint
large-distance limits. This is a reduction of a single convolution problem,
not an interpolation of two endpoint formulas.

The quadratic profile is symmetric. The first asymmetry of the transport
peak is described by the local logarithmic coefficient
$\gamma_3=\sigma^3(\log u_m^a)^{(3)}(t_0)/6$. Retaining that contribution yields
a cubic correction whose positive mode is determined by one scalar
nonlinear equation. Theorems~\ref{thm:quadratic} and \ref{thm:cubic}
establish the unique modes of the quadratic and cubic models; the latter
holds for $0\leq\gamma_3<1/2$. Proposition~\ref{prop:short-lifetime-consistency}
shows that the cubic approximation reproduces the exact reduced-profile
peak expansion through order $\ell^2$, whereas the quadratic model retains
the leading delay. Proposition~\ref{prop:local-orders} identifies the local
approximation orders, and Proposition~\ref{prop:peak-stability} gives a
conditional mechanism for transferring convolution residuals into
peak-location errors.

The scope of the approximation is important. The large-distance and
balanced-path assumptions on $u_m^a$ remain in force, and the local expansion
alone does not control the entire incoming convolution tail. No uniform
error theorem over arbitrary geometries and lifetimes is asserted.
In particular, at fixed geometry the exact reduced-profile peak grows
logarithmically as $\ell\to\infty$, in contrast to the square-root logarithmic
Gaussian delay. The term \emph{lifetime-unified} refers to the single reduced
construction and its joint-regime endpoint matching, not to asymptotic
accuracy in that different limit. Numerical comparisons with the full
response assess the approximation separately: across the 104 tested
configuration--lifetime pairs, the cubic correction reduces the mean
relative error from $3.280\%$ to $0.234\%$, without fitting the formulas to
reference peak times.

Section~\ref{sec:local-geometry} establishes the local geometry and
large-distance scaling of the transport peak. Section~\ref{sec:method}
derives the lifetime-unified formulas and their analytical properties.
Section~\ref{sec:limits} relates them to the earlier endpoint expansions and
distinguishes the fixed-geometry limit. Numerical validation is presented in
Section~\ref{sec:numerics}. Sections~\ref{sec:discussion} and
\ref{sec:conclusion} discuss the scope of the results and give the conclusions.

\section{Local geometry of the transport peak}
\label{sec:local-geometry}

\subsection{Large-distance profile}

We use the parameters in \eqref{eq:lambda-k-q} and the reduced profile
\eqref{eq:asymptotic-profile}, introduced in Section~\ref{sec:introduction}.
We first specify the large-distance setting and then determine the local
geometry of its unique peak.

For clarity, write $\widehat x=x/D$.  The balanced-path condition used in
\cite[Lemma~2.1]{ChenEtAl2026IPI} takes the dimensionless form
\begin{equation}
 \left|\,|\widehat x_d-\widehat x_c|^2
       -|\widehat x_s-\widehat x_c|^2\,\right|
 \leq C_b\widehat t,
 \label{eq:balanced-path}
\end{equation}
with $C_b$ independent of $\widehat\lambda$ on the time window being
approximated.  The nondimensional optical and boundary parameters,
including $D\mu_a$ and $D\beta$, are kept in compact positive sets.  A
concrete admissible family has bounded nondimensional source and detector
positions and bounded lateral target coordinates, while
$\widehat x_{c3}\to\infty$; then
$\widehat t_0\asymp\widehat\lambda\asymp\widehat x_{c3}$ and
\eqref{eq:balanced-path} holds on a peak window.  The local estimates below
concern $u_m^a$ itself.  Their transfer to the full response also requires
control of the transport approximation over the incoming convolution
interval.

Let $L(t)=\log u_m^a(t)$.  Its logarithmic slope is
\begin{equation}
 P(t):=L'(t)
 =-k-\frac{3}{2t}+\frac{\lambda^2}{t^2}
 -\frac{2q}{x_{c3}+qt}.
 \label{eq:P}
\end{equation}
Theorem~3.1 of \cite{ChenEtAl2023CAC} proves, under these assumptions, that
the equation
\begin{equation}
 P(t_0)=0
 \label{eq:t0}
\end{equation}
has a unique positive solution.  The derivative signs needed below can also be
verified directly at this root.  Set
\[
 a=\frac{qt_0}{x_{c3}+qt_0}\in(0,1),\qquad b=kt_0>0.
\]
Multiplication of \eqref{eq:t0} by $t_0$ gives
$\lambda^2/t_0=b+3/2+2a$, and substitution into the first two derivatives of
$P$ yields
\begin{equation}
 -t_0^2P'(t_0)=2b+\frac32+4a-2a^2>0,
 \qquad
 t_0^3P''(t_0)=6b+6+12a-4a^3>0.
 \label{eq:derivative-signs}
\end{equation}
Thus $P'(t_0)<0$ and $P''(t_0)>0$.  Since $P(t)\to+\infty$ as $t\downarrow0$
and $P(t)\to-k<0$ as $t\to\infty$, uniqueness of the zero implies that $t_0$
is the unique peak time of $u_m^a$, because $(u_m^a)'=Pu_m^a$ and $u_m^a>0$.

\subsection{Intrinsic width and skewness}

We define the intrinsic temporal width by
\begin{equation}
 \sigma^2=-\frac1{P'(t_0)},
 \qquad
 P'(t)=\frac{3}{2t^2}-\frac{2\lambda^2}{t^3}
 +\frac{2q^2}{(x_{c3}+qt)^2},
 \label{eq:sigma}
\end{equation}
The first identity in \eqref{eq:derivative-signs} makes this definition
well posed.  The quantity $\sigma$ has units of time and is the inverse
square root of the downward curvature of the log-profile.  In the Gaussian
approximation, moving one $\sigma$ from $t_0$ reduces the response by the
factor $e^{-1/2}$, and the full width at half maximum is approximately
$2\sqrt{2\log2}\,\sigma$.  Thus $t_0$ describes the arrival location of the
transport peak, whereas $\sigma$ describes its temporal spread.

The leading dimensionless local logarithmic cubic-asymmetry coefficient,
which we call the \emph{local logarithmic skewness}, is
\begin{equation}
 \gamma_3:=\frac{\sigma^3P''(t_0)}6,
 \qquad
 P''(t)=-\frac3{t^3}+\frac{6\lambda^2}{t^4}
 -\frac{4q^3}{(x_{c3}+qt)^3}.
 \label{eq:gamma3}
\end{equation}
It is positive for the reduced FDOT profile by
\eqref{eq:derivative-signs}; unlike statistical skewness, it is a coefficient
of the local logarithmic expansion rather than a standardized third moment.
To make the origin of the local model explicit, set
$y=(t-t_0)/\sigma$ and apply Taylor's formula to $L$ at $t_0$.
Since $L'=P$, one obtains
\begin{align*}
 L(t_0+\sigma y)-L(t_0)
 &=\sigma yP(t_0)+\frac{\sigma^2y^2}{2}P'(t_0)
   +\frac{\sigma^3y^3}{6}P''(t_0)\\
 &\quad+\frac{\sigma^4y^4}{24}P'''(t_0)+R_5(y).
\end{align*}
The first term vanishes by \eqref{eq:t0}, while
$\sigma^2P'(t_0)=-1$ by \eqref{eq:sigma}.  Thus the linear and quadratic
terms reduce exactly to $0$ and $-y^2/2$, respectively.  Defining
$\gamma_4=\sigma^4P'''(t_0)/24$ gives
\begin{equation}
 L(t_0+\sigma y)-L(t_0)
 =-\frac{y^2}{2}+\gamma_3y^3+\gamma_4y^4+R_5(y).
 \label{eq:local-log-expansion}
\end{equation}
For example, the integral remainder is
\[
 R_5(y)=\frac{\sigma^5y^5}{4!}\int_0^1(1-s)^4
 P^{(4)}(t_0+s\sigma y)\,ds,
\]
and the derivative entering $\gamma_4$ is
\[
 P'''(t)=\frac9{t^4}-\frac{24\lambda^2}{t^5}
 +\frac{12q^4}{(x_{c3}+qt)^4}.
\]
The next derivative is
\[
 P^{(4)}(t)=-\frac{36}{t^5}+\frac{120\lambda^2}{t^6}
 -\frac{48q^5}{(x_{c3}+qt)^5}.
\]
Together with the scaling estimates stated below, $t_0+s\sigma y$ remains
comparable to $t_0$, so
$P^{(4)}=O(\widehat\lambda^{-4})$ and
$\sigma^5=O(\widehat\lambda^{5/2})$ on every fixed interval
$|y|\leq Y$.  Hence
\begin{equation}
 R_5(y)=O\!\left(\widehat\lambda^{-3/2}|y|^5\right).
 \label{eq:R5-order}
\end{equation}

The four estimates below follow directly from a dominant-balance argument.
In $P(t_0)=0$, the leading terms are
$-k+\lambda^2/t_0^2$; the terms proportional to $t_0^{-1}$ and
$(x_{c3}+qt_0)^{-1}$ are one order smaller.  Hence
$t_0=\lambda/\sqrt{k}+O(1)$.  Substitution into the derivatives gives,
schematically and after the nondimensionalization stated above,
\[
 P'(t_0)=-\frac{2\lambda^2}{t_0^3}
 \bigl(1+O(\widehat\lambda^{-1})\bigr),\qquad
 P''(t_0)=\frac{6\lambda^2}{t_0^4}
 \bigl(1+O(\widehat\lambda^{-1})\bigr),
\]
and
\[
 P'''(t_0)=-\frac{24\lambda^2}{t_0^5}
 \bigl(1+O(\widehat\lambda^{-1})\bigr).
\]
Thus $-P'(t_0)$ is of first inverse order in optical distance and
$\sigma^2=-1/P'(t_0)$ grows linearly with that distance.  Multiplying the
next derivatives by $\sigma^3$ and $\sigma^4$ then produces the half- and
first-inverse orders of $\gamma_3$ and $\gamma_4$.  More precisely, for the
admissible large-distance scaling,
\[
 t_0=\frac{\lambda}{\sqrt{k}}+O(1),\qquad
 \sigma^2=\frac{\lambda}{2k^{3/2}}
 \bigl(1+O(\widehat\lambda^{-1})\bigr),
 \qquad
 \gamma_3=O(\widehat\lambda^{-1/2}),\quad
 \gamma_4=O(\widehat\lambda^{-1}).
\]
Consequently, the normalized peak is asymptotically Gaussian on every fixed
interval $|y|\leq Y$.  The cubic term is the first contribution that
distinguishes the late-time side from the early-time side.

\begin{proposition}[Local approximation orders]
\label{prop:local-orders}
Fix $Y>0$ and assume the nondimensional optical and boundary parameters remain
in compact positive sets under the large-distance scaling.  Uniformly for
$|y|\leq Y$,
\[
 \frac{u_m^a(t_0+\sigma y)}{u_m^a(t_0)}
 =e^{-y^2/2}
 \left[1+O\!\left(\widehat\lambda^{-1/2}(1+|y|^3)\right)\right]
\]
and
\[
 \frac{u_m^a(t_0+\sigma y)}{u_m^a(t_0)}
 =e^{-y^2/2}(1+\gamma_3y^3)
 +e^{-y^2/2}O\!\left(
 \widehat\lambda^{-1}(1+|y|^6)\right).
\]
\end{proposition}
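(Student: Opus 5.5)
We exponentiate the local logarithmic expansion \eqref{eq:local-log-expansion}. For $|y|\le Y$ it gives
\[
 \frac{u_m^a(t_0+\sigma y)}{u_m^a(t_0)}=e^{-y^2/2}\exp\bigl(E(y)\bigr),
 \qquad E(y)=\gamma_3y^3+\gamma_4y^4+R_5(y).
\]
Both claims then reduce to controlling $e^{E}-1$ and $e^{E}-1-\gamma_3y^3$ uniformly on $[-Y,Y]$. The inputs are the scaling estimates established just before the statement: $\gamma_3=O(\widehat\lambda^{-1/2})$, $\gamma_4=O(\widehat\lambda^{-1})$, and $R_5(y)=O(\widehat\lambda^{-3/2}|y|^5)$ from \eqref{eq:R5-order}. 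Uniformity in the compact parameter sets comes from the same dominant-balance argument.

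The first step is to make these orders rigorous and uniform. From $\lambda^2/t_0=b+3/2+2a$ with $a\in(0,1)$ and $b=kt_0$, we get $t_0\asymp\widehat\lambda$ in nondimensional units. The identities \eqref{eq:derivative-signs} then give $\sigma^2=t_0^2/(2b+3/2+4a-2a^2)\asymp\widehat\lambda$, so $\sigma/t_0=O(\widehat\lambda^{-1/2})$. For $|y|\le Y$ and $s\in[0,1]$ this yields $t_0+s\sigma y\in[t_0/2,2t_0]$ once $\widehat\lambda$ is large. On that window the explicit formulas for $P''$, $P'''$ and $P^{(4)}$ are bounded by $C\lambda^2t^{-4}$, $C\lambda^2t^{-5}$ and $C\lambda^2t^{-6}$ respectively. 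The lower-order terms are absorbed because $q/(x_{c3}+qt)\le 1/t$ and $\lambda^2\asymp t_0^2$. Multiplying by $\sigma^3,\sigma^4,\sigma^5$ gives the stated orders with constants depending only on $Y$ and the compact parameter sets.

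Next, on $|y|\le Y$ we have $|E(y)|\le C\widehat\lambda^{-1/2}(|y|^3+|y|^4+|y|^5)$. Since $|y|\le Y$, both $|y|^4$ and $|y|^5$ are bounded by $C_Y|y|^3$, so $|E|\le C_Y\widehat\lambda^{-1/2}|y|^3$. In particular $E$ is bounded, so $|e^E-1|\le e^{|E|}|E|\le C|E|$, which gives the first claim with the factor $(1+|y|^3)$.

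For the second claim we write
\[
 e^E-1-\gamma_3y^3=(e^E-1-E)+\gamma_4y^4+R_5 .
\]
Here $|e^E-1-E|\le CE^2\le C\widehat\lambda^{-1}|y|^6$, while $\gamma_4y^4=O(\widehat\lambda^{-1}|y|^4)$ and $R_5=O(\widehat\lambda^{-3/2}|y|^5)$. All three are bounded by $C\widehat\lambda^{-1}(1+|y|^6)$. Multiplying by $e^{-y^2/2}$ gives the second claim.

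The main obstacle is the uniform control of $P^{(4)}$ and the lower derivatives along the segment $t_0+s\sigma y$. This needs the relative smallness $\sigma/t_0=O(\widehat\lambda^{-1/2})$ and the two-sided bound $\lambda^2\asymp t_0^2$, uniformly over the compact parameter sets. We get these from the exact root identity $\lambda^2/t_0=b+3/2+2a$ rather than from the schematic expansions, which is why the argument rests on that identity. The balanced-path condition \eqref{eq:balanced-path} is not needed for this purely local statement about $u_m^a$.
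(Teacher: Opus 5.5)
Your argument is correct and is essentially the paper's proof: exponentiate \eqref{eq:local-log-expansion}, use $\gamma_3=O(\widehat\lambda^{-1/2})$ for the first bound, and for the second bound control $e^{E}-1-E$ (the paper's $\gamma_3^2y^6$ term), $\gamma_4y^4$, and $R_5$ by $\widehat\lambda^{-1}(1+|y|^6)$ on $|y|\le Y$. The only addition is that you derive the scaling orders of $t_0$, $\sigma$, $\gamma_3$, $\gamma_4$ and $R_5$ directly from the exact root identity $\lambda^2/t_0=b+3/2+2a$ and \eqref{eq:derivative-signs}, with $\lambda^2\asymp t_0^2$ read in the hatted variables, whereas the paper takes its dominant-balance estimates as given.
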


\begin{proof}
The first statement follows by truncating
\eqref{eq:local-log-expansion} after its quadratic term and using
$\gamma_3=O(\widehat\lambda^{-1/2})$.  For the second, expand
$\exp(\gamma_3y^3+\gamma_4y^4+R_5)$ to first order in $\gamma_3$.
The terms $\gamma_4y^4$ and $\gamma_3^2y^6$ are both
$O(\widehat\lambda^{-1}(1+|y|^6))$ on the stated interval.  In addition,
\eqref{eq:R5-order} gives
$R_5(y)=O(\widehat\lambda^{-3/2}|y|^5)$, which is absorbed by the displayed
remainder for fixed $Y$.
\end{proof}

\section{Lifetime-unified peak-time method}
\label{sec:method}

\subsection{An exact peak-balance identity}

To distinguish the reduced transport profile from the full reference, define
its lifetime convolution by
\begin{equation}
 U_\ell^a(t):=\frac1\ell\int_0^t
 e^{-(t-s)/\ell}u_m^a(s)\,ds .
 \label{eq:asymptotic-lifetime-convolution}
\end{equation}
For every $\ell>0$, direct differentiation gives
\begin{equation}
 \ell (U_\ell^a)'(t)+U_\ell^a(t)=u_m^a(t).
 \label{eq:peak-balance-ode}
\end{equation}
Hence a stationary point satisfies $U_\ell^a(t)=u_m^a(t)$.  This identity is
exact for the lifetime convolution of the asymptotic transport profile and
contains no small- or large-lifetime assumption.

The convolution $U_\ell^a$ also has a unique strict maximum, which we denote
by $t_a$, and $t_a>t_0$.  Indeed, for $0<t\leq t_0$, strict increase of
$u_m^a$ gives $U_\ell^a(t)<u_m^a(t)$, so $(U_\ell^a)'(t)>0$.
The response is positive and tends to zero as $t\to\infty$, hence it attains
a maximum after $t_0$.  At any such stationary point,
\[
 \frac{d}{dt}\{U_\ell^a-u_m^a\}=-(u_m^a)'(t)>0.
\]
Thus every zero of $U_\ell^a-u_m^a$ after $t_0$ is a simple upward crossing,
which implies uniqueness of the maximum.

\subsection{Quadratic formula}

Substitute the quadratic approximation
$u_m^a(t_0+\sigma x)/u_m^a(t_0)\approx e^{-x^2/2}$ into the lifetime
convolution.  The physical lower limit becomes $-t_0/\sigma$ in the $x$
variable.  Because $t_0/\sigma=O(\widehat\lambda^{1/2})$, replacing it by
$-\infty$ produces an exponentially small Gaussian-tail error.  Completion
of the square then yields an exponentially modified Gaussian response, as
shown next.

Set $\rho=\ell/\sigma$, $y=(t-t_0)/\sigma$, and
$\phi(x)=e^{-x^2/2}$.  The dimensionless convolution of the quadratic input is
\begin{align*}
 Q_2(y;\rho)
 &=\frac1\rho\int_{-\infty}^{y}
 e^{-(y-x)/\rho}\phi(x)\,dx\\
 &=\frac1\rho e^{-y/\rho+1/(2\rho^2)}
 \int_{-\infty}^{y}
 \exp\!\left[-\frac{(x-\rho^{-1})^2}{2}\right]dx\\
 &=\frac1\rho\sqrt{\frac\pi2}\,
 e^{-y/\rho+1/(2\rho^2)}
 \erfc\!\left(\frac{\rho^{-1}-y}{\sqrt2}\right).
\end{align*}
The second line follows from
\[
 -\frac{y-x}{\rho}-\frac{x^2}{2}
 =-\frac{y}{\rho}+\frac1{2\rho^2}
  -\frac{(x-\rho^{-1})^2}{2}.
\]
Dividing by the local input $\phi(y)$ and using
$\erfcx(z)=e^{z^2}\erfc(z)$ gives
\[
 \frac{Q_2(y;\rho)}{\phi(y)}
 =R(y;\rho):=\frac1\rho\sqrt{\frac\pi2}\,
 \erfcx\!\left(\frac{\rho^{-1}-y}{\sqrt2}\right).
\]
The peak-balance condition $Q_2(y;\rho)=\phi(y)$ from
\eqref{eq:peak-balance-ode} is therefore
\begin{equation}
 R(y;\rho)=1.
 \label{eq:quadratic-mode-balance}
\end{equation}
The scaled complementary error function is strictly decreasing on $\R$, so
the equation has exactly one real solution.  The solution is positive because
an exponential fluorescence kernel can only delay the mode of the Gaussian
input.

\begin{theorem}[Quadratic lifetime-unified peak time]
\label{thm:quadratic}
Let $t_0$ and $\sigma$ be defined by \eqref{eq:t0} and \eqref{eq:sigma}.
For every $\ell>0$, the quadratic reduced model has the unique peak time
\begin{equation}
 \tpeak^{(2)}
 =t_0+\sigma\left[
 \frac1\rho-\sqrt2\,
 \erfcx^{-1}\!\left(\sqrt{\frac2\pi}\,\rho\right)
 \right],
 \qquad \rho=\frac\ell\sigma .
 \label{eq:quadratic-formula}
\end{equation}
For $\ell=0$, the continuous extension is
$\tpeak^{(2)}=t_0$.
\end{theorem}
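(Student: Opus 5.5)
The proof reduces everything to the dimensionless model $Q_2(y;\rho)$, already computed in closed form above, and then rescales back through $t=t_0+\sigma y$. There are three steps: show that $Q_2(\cdot;\rho)$ has a unique strict maximum, identify that maximum with the unique root of \eqref{eq:quadratic-mode-balance}, and invert that root explicitly.

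\textbf{Stationary points.} Differentiating the integral gives $\rho\,\partial_yQ_2+Q_2=\phi$, the dimensionless analogue of \eqref{eq:peak-balance-ode}. So stationary points are exactly the solutions of $Q_2=\phi$, that is, of $R(y;\rho)=1$. For the monotonicity of $\erfcx$ I would argue as follows:
\[
\erfcx(z)=\frac{2}{\sqrt\pi}\int_0^\infty e^{-s^2-2sz}\,ds,
\]
and differentiating under the integral gives $\erfcx'(z)<0$. The map $y\mapsto(\rho^{-1}-y)/\sqrt2$ is decreasing, so $y\mapsto R(y;\rho)$ is strictly increasing. Its limits follow from $\erfcx(z)\to0$ as $z\to+\infty$ and $\erfcx(z)\to+\infty$ as $z\to-\infty$: $R\to0$ as $y\to-\infty$ and $R\to\infty$ as $y\to+\infty$. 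Hence there is a unique root $y_*$.

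\textbf{The root is a strict global maximum.} For $y<y_*$ we have $Q_2<\phi$, so $\partial_yQ_2>0$; for $y>y_*$ we have $Q_2>\phi$, so $\partial_yQ_2<0$. This is the same crossing argument as for $U_\ell^a$ earlier.

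\textbf{Explicit inversion.} Since $\erfcx$ is a bijection of $\R$ onto $(0,\infty)$, it has an inverse there. The equation $R=1$ becomes
\[
\erfcx\!\bigl((\rho^{-1}-y)/\sqrt2\bigr)=\sqrt{2/\pi}\,\rho,
\]
which gives $y_*=\rho^{-1}-\sqrt2\,\erfcx^{-1}(\sqrt{2/\pi}\,\rho)$. Undoing the scaling yields \eqref{eq:quadratic-formula}. I would also check positivity, $y_*>0$: it is equivalent to $R(0;\rho)<1$, i.e. $\erfcx(1/(\sqrt2\rho))<\sqrt{2/\pi}\,\rho$. This follows from the standard bound $\erfcx(z)<1/(\sqrt\pi z)$ for $z>0$, which comes from $\int_z^\infty e^{-s^2}\,ds<\int_z^\infty (s/z)e^{-s^2}\,ds$. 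Alternatively, it follows from the crossing argument, since $\phi$ is increasing for $y<0$.

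\textbf{The limit $\ell\to0$ (main obstacle).} The delicate part is the continuous extension at $\ell=0$, where the two terms $\rho^{-1}$ and $\sqrt2\,\erfcx^{-1}(\cdot)$ both blow up. As $\rho\to0$ the argument $w=\sqrt{2/\pi}\,\rho$ tends to $0$, so $z=\erfcx^{-1}(w)\to+\infty$. I would use the asymptotics
\[
\erfcx(z)=\frac{1}{\sqrt\pi\,z}\bigl(1-\tfrac{1}{2z^2}+O(z^{-4})\bigr).
\]
Solving $\erfcx(z)=w$ gives $z=1/(\sqrt\pi w)+O(w)=1/(\sqrt2\rho)+O(\rho)$, since $\sqrt\pi w=\sqrt2\rho$. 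Therefore $y_*=\rho^{-1}-\sqrt2 z=O(\rho)\to0$, and $\tpeak^{(2)}\to t_0$. More precisely, the bracket is $\rho+O(\rho^3)$, consistent with the leading delay $\ell$. Care is needed to justify inverting the asymptotic series; I would do it via monotonicity and explicit two-sided bounds on $\erfcx$ for large $z$.
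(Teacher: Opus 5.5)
Your proposal is correct and follows essentially the same route as the paper: you reduce stationarity to $R(y;\rho)=1$ via $\rho\,\partial_yQ_2+Q_2=\phi$, invert using the strict monotonicity of $\erfcx$, read off the strict global maximum from $\rho\,\partial_yQ_2=\phi(1-R)$, and obtain the $\ell=0$ extension from $y=\rho+O(\rho^3)$. The differences are only in detail: you check positivity through $\erfcx(z)<1/(\sqrt\pi z)$, where the paper uses $Q_2(0;\rho)<1=\phi(0)$ directly, and your plan to justify the small-$\rho$ inversion with two-sided bounds on $\erfcx$ is more self-contained than the paper's appeal to the formally matched series of Section~\ref{sec:limits}.
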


\begin{proof}
Put $z=(\rho^{-1}-y)/\sqrt2$.  Equation
\eqref{eq:quadratic-mode-balance} is equivalent to
\[
 \erfcx(z)=\sqrt{\frac2\pi}\,\rho.
\]
Since $\erfcx$ is continuous and strictly decreasing from $+\infty$ to $0$,
this equation has the unique solution
$z=\erfcx^{-1}(\sqrt{2/\pi}\,\rho)$.  Solving the definition of $z$ for $y$
and then using $t=t_0+\sigma y$ yields
\eqref{eq:quadratic-formula}.  Moreover,
\[
 R(0;\rho)=Q_2(0;\rho)
 <\frac1\rho\int_{-\infty}^{0}e^{x/\rho}\,dx=1,
\]
because $e^{-x^2/2}<1$ for $x<0$.  Since $R$ is strictly increasing in $y$,
the unique root is positive.  Moreover, the reduced convolution satisfies
\[
 \rho Q_2'(y;\rho)=\phi(y)-Q_2(y;\rho)
 =\phi(y)\{1-R(y;\rho)\}.
\]
It therefore increases before the root and decreases after it, so the root is
the unique strict global maximum.  Finally, the small-$\rho$ expansion in
Section~\ref{sec:limits} gives $y=\rho+O(\rho^3)$, proving the stated
continuous extension at $\ell=0$.
\end{proof}

Formula~\eqref{eq:quadratic-formula} is universal after $t_0$ and $\sigma$
have been determined: all lifetime dependence is contained in one scalar
function of $\rho$.  Optical distance, absorption, target depth, and boundary
loss affect the transport center and width; fluorescence lifetime then adds a
normalized delay.

\subsection{Cubic correction and selection of the physical root}

The quadratic model imposes local symmetry.  Retaining the leading skewness
term from Proposition~\ref{prop:local-orders} gives, for a generic coefficient
$\gamma$,
\[
 g_\gamma(x):=e^{-x^2/2}(1+\gamma x^3).
\]
This is a local asymptotic profile, not a globally positive probability
density.  Its value is that all convolution integrals remain explicit.
Define
\begin{align}
 S_3(y;\rho)
 &:=-\frac{y^2}{\rho}-\frac{y}{\rho^2}
 +\frac{3R(y;\rho)-2}{\rho}
 +\frac{R(y;\rho)-1}{\rho^3},
 \label{eq:S3}\\
 F(y,\gamma;\rho)
 &:=R(y;\rho)-1+\gamma\{S_3(y;\rho)-y^3\}.
 \label{eq:cubic-F}
\end{align}

\begin{theorem}[Cubic lifetime-unified peak time]
\label{thm:cubic}
For every $\rho>0$ and $0\leq\gamma_3<1/2$, the equation
\begin{equation}
 F(y_3,\gamma_3;\rho)=0
 \label{eq:cubic-equation}
\end{equation}
has exactly one positive root.  This root is simple, is the strict maximum of
the cubically reduced convolution on $[0,\infty)$, depends continuously on
$\gamma_3$, and satisfies $y_3\to y_2$ as $\gamma_3\to0$, where
\[
 y_2=\rho^{-1}-\sqrt2\,
 \erfcx^{-1}\!\left(\sqrt{2/\pi}\,\rho\right).
\]
The physical cubic peak approximation is
\begin{equation}
 \tpeak^{(3)}=t_0+\sigma y_3.
 \label{eq:cubic-formula}
\end{equation}
For $\ell=0$, its continuous extension is $\tpeak^{(3)}=t_0$.
\end{theorem}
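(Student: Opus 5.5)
The plan is to read $F$ as the normalized peak-balance residual of the cubically reduced convolution, exactly as $R-1$ was for the quadratic model in Theorem~\ref{thm:quadratic}. Set
\[
 C(y;\rho)=\frac1\rho\int_{-\infty}^{y}e^{-(y-x)/\rho}x^3\phi(x)\,dx,
 \qquad
 Q_3=Q_2+\gamma C .
\]
The first step is to verify, by repeated integration by parts (or by differentiating under the integral), that $C/\phi=S_3$ with $S_3$ as in \eqref{eq:S3}. Then $F=(Q_3-g_\gamma)/\phi$. The analogue of \eqref{eq:peak-balance-ode} reads $\rho Q_3'=g_\gamma-Q_3=-\phi F$. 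Hence stationary points of $Q_3$ are exactly the zeros of $F$, and $Q_3$ increases exactly where $F<0$.

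\emph{Step 1 (derivative identities).} Differentiating $\erfcx$ via $\erfcx'(z)=2z\erfcx(z)-2/\sqrt\pi$ gives
\[
 R'=yR+\frac{1-R}{\rho}.
\]
From $\rho C'=y^3\phi-C$ one similarly gets
\[
 S_3'=yS_3+\frac{y^3-S_3}{\rho}.
\]
Hence
\[
 \partial_yF=y(R+\gamma S_3)-\frac{R-1+\gamma(S_3-y^3)}{\rho}-3\gamma y^2 .
\]
At any root of $F$ we have $R+\gamma S_3=1+\gamma y^3$, so
\[
 \partial_yF=y\,(1-3\gamma y+\gamma y^3).
\]
Since $\min_{y\ge0}(y^3-3y)=-2$ (attained at $y=1$), this is $\ge y(1-2\gamma)>0$ for $y>0$ whenever $0\le\gamma<1/2$. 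This is the key step and the place where the hypothesis $\gamma_3<1/2$ enters. Every positive root is therefore a simple upward crossing, so there is at most one.

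\emph{Step 2 (existence).} At $y=0$,
\[
 F(0)=Q_2(0)-1+\gamma C(0).
\]
Here $Q_2(0)<1$ as in the proof of Theorem~\ref{thm:quadratic}, and $C(0)<0$ because its integrand has the sign of $x^3<0$. Hence $F(0)<0$. As $y\to\infty$, with $m=\rho^{-1}$,
\[
 \phi(y)F\sim e^{-y/\rho}\,\frac1\rho\int_{\R}e^{x/\rho}(1+\gamma x^3)\phi(x)\,dx .
\]
The right-hand side is proportional to $1+\gamma(m^3+3m)>0$, the bracket being $1+\gamma$ times the third moment of a Gaussian with mean $m$ and unit variance. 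Meanwhile $g_\gamma$ and $\phi$ are super-exponentially small, so $F\to+\infty$. I expect this asymptotic comparison to need the most care, since the error terms must be controlled uniformly; an explicit bound on the tail of $\int_y^\infty$ suffices. The intermediate value theorem then gives a positive root $y_3$. Combined with Step 1 and $\rho Q_3'=-\phi F$, $Q_3$ increases on $[0,y_3)$ and decreases afterwards, so $y_3$ is the strict maximum on $[0,\infty)$.

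\emph{Step 3 (continuity and limits).} Since $\partial_yF(y_3,\gamma_3;\rho)>0$ and $F$ is smooth in $(y,\gamma)$, the implicit function theorem yields continuous (indeed smooth) dependence of $y_3$ on $\gamma_3\in[0,1/2)$. At $\gamma=0$ the equation reduces to \eqref{eq:quadratic-mode-balance}, whose unique root is $y_2$; uniqueness then forces $y_3\to y_2$. For $\ell=0$, i.e.\ $\rho\to0$, I would bracket the root: show $F(y,\gamma;\rho)>0$ at $y=c\rho$ for a fixed constant $c$ and small $\rho$. For this I would use the small-$\rho$ Laplace expansions $R=1+(1-y\rho)\cdots$, obtained from the asymptotic series of $\erfcx$ at large argument, together with the corresponding expansion of $S_3$. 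Combined with $F(0)<0$, this gives $0<y_3<c\rho\to0$. Hence $\sigma y_3\to0$ and $\tpeak^{(3)}\to t_0$.
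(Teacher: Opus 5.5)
Your proposal is correct and follows the paper's proof essentially step for step. It uses the same integration-by-parts reduction $J_3/\phi=S_3$, the same sign $F(0)<0$ and positive $e^{-y/\rho}$ tail for existence, and the same implicit-function argument; your root identity $\partial_yF=y(1-3\gamma y+\gamma y^3)$ is exactly the paper's $\Psi_\gamma'=-g_\gamma'$ divided by $\phi$. The only variation is the $\rho\downarrow0$ step, where your bracket $0<y_3<c\rho$ with $c>1$ is a quantitative version of the paper's approximate-identity argument; it is obtained most cleanly from $\mathcal{K}_\rho h=h-\rho\,\mathcal{K}_\rho h'$, which gives $\phi F(c\rho)=(c-1)\rho^2+O(\rho^3)$ and avoids the cancellation in $(R-1)/\rho^3$ that the $\erfcx$-series route must carry to order $\rho^4$.
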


\begin{proof}
Write $\phi(x)=e^{-x^2/2}$ and, for $n\geq0$, define
\[
 J_n(y;\rho)=\frac1\rho\int_{-\infty}^{y}
 e^{-(y-x)/\rho}x^n\phi(x)\,dx.
\]
Thus $J_0=Q_2$ and the convolution of $g_\gamma$ is
$Q_\gamma:=J_0+\gamma J_3$.

We first reduce $J_3$ to $J_0$.  Since
\[
 x^n\phi(x)=(n-1)x^{n-2}\phi(x)
 -\frac{d}{dx}\{x^{n-1}\phi(x)\},
\]
integration by parts, with the boundary contribution at $-\infty$ equal to
zero, gives for $n\geq1$
\[
 J_1=\frac{J_0-e^{-y^2/2}}\rho,
 \qquad
 J_n=-\frac{y^{n-1}}\rho e^{-y^2/2}
 +(n-1)J_{n-2}+\frac1\rho J_{n-1}.
\]
Because $J_0/\phi(y)=R(y;\rho)$, the first two applications of the recurrence
give
\[
 \frac{J_1}{\phi(y)}=\frac{R-1}{\rho},\qquad
 \frac{J_2}{\phi(y)}=-\frac y\rho+R+\frac{R-1}{\rho^2}.
\]
A third application yields
\begin{align*}
 \frac{J_3}{\phi(y)}
 &=-\frac{y^2}{\rho}+2\frac{J_1}{\phi(y)}
   +\frac1\rho\frac{J_2}{\phi(y)}\\
 &=-\frac{y^2}{\rho}-\frac y{\rho^2}
   +\frac{3R-2}{\rho}+\frac{R-1}{\rho^3}
 =S_3(y;\rho).
\end{align*}
The peak-balance equation for $g_\gamma=\phi(1+\gamma y^3)$ is
\[
 J_0+\gamma J_3=\phi(y)(1+\gamma y^3).
\]
After division by $\phi(y)$, this becomes precisely
$F(y,\gamma;\rho)=0$.

It remains to identify the physical root globally on the positive half-line.
Put $\Psi_\gamma=Q_\gamma-g_\gamma=\phi F$.  For $x<0$ and $\gamma\geq0$,
$g_\gamma(x)<1$, and hence
\[
 Q_\gamma(0)=\frac1\rho\int_{-\infty}^{0}
 e^{x/\rho}g_\gamma(x)\,dx<1=g_\gamma(0).
\]
On the other hand,
\[
 \lim_{y\to\infty}e^{y/\rho}Q_\gamma(y)
 =\frac{\sqrt{2\pi}}\rho e^{1/(2\rho^2)}
 \left[1+\gamma\left(\rho^{-3}+3\rho^{-1}\right)\right]>0.
\]
Because $g_\gamma$ has Gaussian decay, $\Psi_\gamma(y)>0$ for all sufficiently
large $y$.  A positive zero therefore exists.

The convolution satisfies $\rho Q_\gamma'+Q_\gamma=g_\gamma$.  At any positive
zero of $\Psi_\gamma$, this gives $Q_\gamma'=0$ and therefore
\[
 \Psi_\gamma'=-g_\gamma',\qquad
 g_\gamma'(y)=e^{-y^2/2}y\{-1+\gamma(3y-y^3)\}.
\]
For $y>0$, $3y-y^3\leq2$.  Thus $g_\gamma'(y)<0$ whenever
$0\leq\gamma<1/2$, so every positive zero of $\Psi_\gamma$ is an upward
crossing.  Two such zeros are impossible: after the first upward crossing the
function would have to cross downward before reaching the negative side of a
second upward crossing.  The positive zero is therefore unique.  Moreover,
$Q_\gamma'=-\Psi_\gamma/\rho$, so $Q_\gamma$ increases before this zero and
decreases after it on $[0,\infty)$.

At the root, $\partial_y(\phi F)=\Psi_\gamma'=-g_\gamma'>0$; hence the
implicit-function theorem gives continuous dependence on $\gamma$.  At
$\gamma=0$ the unique root is $y_2$ by Theorem~\ref{thm:quadratic}, which also
proves $y_3\to y_2$ as $\gamma\to0$.  Finally, the one-sided exponential
kernel is an approximate identity as $\rho\downarrow0$.  Since $g_\gamma$ has
its unique maximum at zero for $0\leq\gamma<1/2$, the positive mode satisfies
$y_3\to0$, giving the stated extension at $\ell=0$.
\end{proof}

The cubic equation may have a remote negative root caused by extending a local
polynomial model to the entire line.  The theorem removes any ambiguity on the
positive half-line: that negative root is not the fluorescence peak.  For each
fixed $\rho>0$, a first-order expansion as $\gamma_3\to0$ is
\begin{equation}
 y_3=y_2-\frac{\gamma_3}{y_2}
 \left(\frac{1-y_2^2}{\rho}-\frac{y_2}{\rho^2}-y_2^3\right)
 +O(\gamma_3^2).
 \label{eq:explicit-cubic}
\end{equation}
Indeed, differentiating $F(y_3(\gamma),\gamma;\rho)=0$ at $\gamma=0$
gives
$y_3'(0)=-[S_3(y_2;\rho)-y_2^3]/y_2$; inserting
$R(y_2;\rho)=1$ produces \eqref{eq:explicit-cubic}.  Independently, for fixed
$0\leq\gamma_3<1/2$, expanding
$Q_{\gamma_3}=(1+\rho\,\partial_y)^{-1}g_{\gamma_3}$ and matching powers in
the peak-balance equation gives, as $\rho\to0$,
\begin{equation}
\begin{aligned}
 y_3={}&\rho+3\gamma_3\rho^2-\rho^3
 +9\gamma_3(3\gamma_3^2-2)\rho^4\\
 &+(42\gamma_3^2+4)\rho^5+O(\rho^6).
\end{aligned}
 \label{eq:cubic-small-rho}
\end{equation}
Direct evaluation of the closed balance function
\eqref{eq:cubic-F} suffers subtractive cancellation as $\rho\to0$.  The
included implementation therefore uses \eqref{eq:cubic-small-rho} below a
prescribed small-$\rho$ threshold and the positive-root bracketing solve
otherwise.

The quadratic and cubic expansions can also be compared directly with the
exact convolution of the reduced transport profile at short lifetime.

\begin{proposition}[Short-lifetime consistency]
\label{prop:short-lifetime-consistency}
Fix the geometry and optical parameters, and suppose $\gamma_3<1/2$.
The unique peak $t_a$ of $U_\ell^a$ satisfies
\begin{equation}
 t_a=t_0+\ell-\frac{P''(t_0)}{2P'(t_0)}\ell^2+O(\ell^3)
 =t_0+\ell+\frac{3\gamma_3}{\sigma}\ell^2+O(\ell^3),
 \qquad \ell\downarrow0.
 \label{eq:short-lifetime-consistency}
\end{equation}
Consequently, the cubic approximation reproduces the expansion through
order $\ell^2$, whereas the quadratic approximation retains only its
leading lifetime delay.
\end{proposition}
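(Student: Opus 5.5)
The plan is to work directly with the exact peak-balance identity \eqref{eq:peak-balance-ode}. At the unique maximum $t_a$ of $U_\ell^a$ we have $U_\ell^a(t_a)=u_m^a(t_a)$. The argument has two parts. First, expand $U_\ell^a$ in powers of $\ell$ near $t_0$. Second, solve the resulting scalar equation for $t_a-t_0$ perturbatively.

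\textbf{Step 1: short-lifetime expansion of the convolution.} Substituting $s=t-\ell\tau$ in \eqref{eq:asymptotic-lifetime-convolution} gives
\[
U_\ell^a(t)=\int_0^{t/\ell}e^{-\tau}u_m^a(t-\ell\tau)\,d\tau .
\]
For $t$ in a fixed compact neighbourhood of $t_0$, the truncation at $\tau=t/\ell$ costs only $O(e^{-c/\ell})$, because $u_m^a$ is bounded. Taylor expansion of $u_m^a(t-\ell\tau)$ to third order, together with $\int_0^\infty\tau^n e^{-\tau}\,d\tau=n!$, then gives
\[
U_\ell^a=u-\ell u'+\ell^2u''-\ell^3u'''+O(\ell^4),
\]
uniformly near $t_0$, where $u=u_m^a$ is smooth on $(0,\infty)$. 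So the balance $U_\ell^a(t)-u(t)=0$ becomes
\[
u'(t)-\ell u''(t)+\ell^2u'''(t)+O(\ell^3)=0.
\]
Equivalently, $(1+\ell\partial_t)^{-1}$ is expanded as a Neumann series. This matches the device already used for \eqref{eq:cubic-small-rho}.

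\textbf{Step 2: localisation and the perturbative solve.} I must first show that $t_a\to t_0$ as $\ell\downarrow0$, so that the local expansion applies. One route is the approximate-identity argument from the proof of Theorem~\ref{thm:cubic}. A more direct route uses the facts established before Theorem~\ref{thm:quadratic}: $t_a>t_0$, and $U_\ell^a-u$ changes sign exactly once after $t_0$. Evaluating the balance function at $t_0+2\ell$ via Step 1 shows it is already positive there, which traps $t_a$ in $(t_0,t_0+2\ell)$.

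Next write $t_a=t_0+\ell\eta$ and divide the balance by $\ell$. Use $u'=Pu$, which gives $u''=(P'+P^2)u$ and $u'''=(P''+3PP'+P^3)u$. Expand at $t_0$ using $P(t_0)=0$: to the needed order, $P(t_a)=P'\ell\eta+\tfrac12P''\ell^2\eta^2$ and $P'(t_a)=P'+P''\ell\eta$. Dividing out $u(t_a)>0$ and collecting powers of $\ell$ gives
\[
P'\eta-P'+\ell\bigl[\tfrac12P''\eta^2-P''\eta+P''\bigr]+O(\ell^2)=0 .
\]
Here the $P^2$ term contributes only at $O(\ell^2)$ and $u'''$ contributes $\ell P''$. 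The leading order gives $\eta=1$. At next order, with $\eta=1+\ell\eta_1$, we get $P'\eta_1+\tfrac12P''=0$, so
\[
\eta_1=-\frac{P''}{2P'} .
\]
The implicit function theorem in $(\eta,\ell)$ at $(1,0)$ makes this rigorous, since the $\eta$-derivative there is $P'(t_0)\neq0$, and it yields the $O(\ell^3)$ remainder. The second form of \eqref{eq:short-lifetime-consistency} follows from $\sigma^2=-1/P'(t_0)$ and $\gamma_3=\sigma^3P''(t_0)/6$:
\[
-\frac{P''}{2P'}=\frac{\sigma^2P''}{2}=\frac{3\gamma_3}{\sigma}.
\]

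\textbf{Step 3: the consequence.} Insert $\rho=\ell/\sigma$ into \eqref{eq:cubic-small-rho}. This gives
\[
\sigma y_3=\ell+\frac{3\gamma_3}{\sigma}\ell^2+O(\ell^3),
\]
which matches Step 2 through order $\ell^2$. By contrast, the small-$\rho$ expansion of $y_2$ (the $\gamma_3=0$ case, $y_2=\rho-\rho^3+\dots$) has no $\ell^2$ term. It therefore reproduces only the leading delay $\ell$, and misses $3\gamma_3\ell^2/\sigma$ whenever $\gamma_3>0$. The hypothesis $\gamma_3<1/2$ is needed only so that Theorem~\ref{thm:cubic} and \eqref{eq:cubic-small-rho} apply.

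\textbf{Main obstacle.} I expect the hardest part to be the localisation in Step 2 together with uniformity of the Taylor remainder. The convolution integral reaches down to $s=0$, where $u_m^a$ vanishes to all orders because of the factor $e^{-\lambda^2/t}$. This makes the tail exponentially small, but it has to be stated carefully so that the $O(\ell^4)$ remainder in Step 1 holds uniformly on a neighbourhood of $t_0$. I would handle this by splitting the $\tau$-integral at $\tau=\ell^{-1/2}$.
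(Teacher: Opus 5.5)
Your proposal is correct and follows essentially the paper's route: expand the exponential convolution in powers of $\ell$ uniformly near $t_0$, solve the stationary-point equation perturbatively to get $c_2=-P''(t_0)/(2P'(t_0))=3\gamma_3/\sigma$, and compare with \eqref{eq:cubic-small-rho}; your Taylor expansion after $s=t-\ell\tau$ is equivalent to the paper's repeated integration by parts for $(U_\ell^a)'$, via $\ell(U_\ell^a)'=u_m^a-U_\ell^a$. Your localization $t_a\in(t_0,t_0+2\ell)$ makes explicit a step the paper only asserts, and because your expansion is uniform for $\eta\in[0,2]$, a direct bootstrap ($\eta=1+O(\ell)$, then $\eta_1=-P''/(2P')+O(\ell)$) already gives the $O(\ell^3)$ remainder without the joint $C^1$ regularity the implicit function theorem would require.
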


\begin{proof}
Put $u=u_m^a$.  Repeated integration by parts in the exponential convolution
gives, uniformly near $t_0$,
\[
 (U_\ell^a)'(t)=u'(t)-\ell u''(t)+\ell^2u'''(t)+O(\ell^3).
\]
The derivatives of $u$ vanish at $t=0$, so there are no initial boundary
terms.  Since $u''(t_0)=P'(t_0)u(t_0)<0$, the stationary point lies at
$t_a=t_0+\ell+c_2\ell^2+O(\ell^3)$.  Substitution into the preceding
identity gives $u''(t_0)c_2+\tfrac12u'''(t_0)=0$.  At $t_0$,
$u'''(t_0)=P''(t_0)u(t_0)$, and therefore
$c_2=-P''(t_0)/(2P'(t_0))=3\gamma_3/\sigma$.
Equation~\eqref{eq:cubic-small-rho} has the same coefficient after
$\rho=\ell/\sigma$ is substituted.  The Gaussian model has no $\ell^2$
term, as also seen in \eqref{eq:small-lifetime}.
\end{proof}

\subsection{Peak stability and error decomposition}

For a normalized input profile $g$, introduce
\[
 (\Kop g)(y)=\frac1\rho\int_{-\infty}^{y}
 e^{-(y-x)/\rho}g(x)\,dx,
 \qquad
 \Phi_g(y)=(\Kop g)(y)-g(y).
\]
The reduced peak is a zero of $\Phi_g$.

\begin{proposition}[Conditional peak-location stability]
\label{prop:peak-stability}
Let $g$ and $\widetilde g$ have simple peak-balance roots $y_g$ and
$y_{\widetilde g}$ in an interval $I$.  Suppose
$|\Phi_g'(y)|\geq m>0$ between the two roots and
\[
 \varepsilon_I:=\sup_{y\in I}
 |\Phi_g(y)-\Phi_{\widetilde g}(y)|<\infty.
\]
Then
\begin{equation}
 |y_g-y_{\widetilde g}|\leq\frac{\varepsilon_I}{m}.
 \label{eq:root-stability}
\end{equation}
If $\|g-\widetilde g\|_{L^\infty(( -\infty,\sup I])}\leq\varepsilon$, then
$\varepsilon_I\leq2\varepsilon$ because $\Kop$ is a positive contraction in
$L^\infty$.
\end{proposition}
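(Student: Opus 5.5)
The plan is a mean-value argument applied to $\Phi_g$ between the two roots, followed by an $L^\infty$ bound on $\Kop$ for the final clause. Write $J$ for the closed interval with endpoints $y_g$ and $y_{\widetilde g}$. Since both roots lie in the interval $I$, so does $J$. We may assume $y_g\neq y_{\widetilde g}$, since otherwise there is nothing to prove. We also need $\Phi_g$ to be $C^1$ on $J$. This holds whenever $g$ is $C^1$ and of at most exponential growth at $-\infty$ (milder than $e^{-x/\rho}$), because $\rho(\Kop g)'=g-\Kop g$ then gives $\Phi_g'=-(\Phi_g/\rho)-g'$.

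\textbf{Main estimate.} The derivative $\Phi_g'$ is continuous on $J$ and satisfies $|\Phi_g'|\geq m>0$ there, so it cannot change sign. Hence $\Phi_g$ is strictly monotone on $J$. The mean value theorem then gives
\[
 |\Phi_g(y_{\widetilde g})-\Phi_g(y_g)|\geq m\,|y_{\widetilde g}-y_g|.
\]
Now use $\Phi_g(y_g)=0$ and $\Phi_{\widetilde g}(y_{\widetilde g})=0$ to rewrite the left side:
\[
 |\Phi_g(y_{\widetilde g})|
 =|\Phi_g(y_{\widetilde g})-\Phi_{\widetilde g}(y_{\widetilde g})|
 \leq\varepsilon_I,
\]
since $y_{\widetilde g}\in I$. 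Combining the two displays gives \eqref{eq:root-stability}. Simplicity of the roots is not essential here; the lower bound on $|\Phi_g'|$ is what carries the argument.

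\textbf{Final clause.} For $y\in I$, linearity gives
\[
 \Phi_g-\Phi_{\widetilde g}=\Kop(g-\widetilde g)-(g-\widetilde g).
\]
The kernel $\rho^{-1}e^{-(y-x)/\rho}$ on $(-\infty,y]$ is positive and has unit mass. Therefore $|\Kop h(y)|\leq\sup_{x\leq y}|h(x)|\leq\|h\|_{L^\infty((-\infty,\sup I])}$. This bound depends only on values at $x\leq y\leq\sup I$, which is why the norm is taken over $(-\infty,\sup I]$. The triangle inequality then yields $\varepsilon_I\leq 2\varepsilon$.

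\textbf{Expected obstacle.} The only delicate point is the standing regularity needed to make sense of $\Phi_g'$ and to apply the mean value theorem. Integrability of $g$ against the exponential kernel at $-\infty$ is also needed, so that $\Kop g$ is defined. I would state these as implicit hypotheses. They are satisfied by every profile used in the paper: the Gaussian and cubic local models, and the reduced transport profile extended by zero for $t<0$ (on $I$ only the derivative of $g$ near $J$ matters). Everything else is elementary.
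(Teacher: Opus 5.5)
Your proposal is correct and follows the paper's own proof: you bound $|\Phi_g(y_{\widetilde g})|=|\Phi_g(y_{\widetilde g})-\Phi_{\widetilde g}(y_{\widetilde g})|\le\varepsilon_I$, apply the mean-value theorem with the derivative lower bound $m$, and control $\Kop$ by the unit mass of its positive kernel on $(-\infty,y]$, which gives $\varepsilon_I\le2\varepsilon$. Your regularity remarks (the identity $\Phi_g'=-\Phi_g/\rho-g'$ and convergence of the kernel integral at $-\infty$) make explicit standing assumptions that the paper leaves implicit. The sign-constancy step is harmless but unnecessary, since the mean-value point already lies strictly between the two roots, where $|\Phi_g'|\ge m$ is assumed.
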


\begin{proof}
At $y_{\widetilde g}$,
$|\Phi_g(y_{\widetilde g})|\leq\varepsilon_I$.  Since
$\Phi_g(y_g)=0$, the mean-value theorem and the derivative lower bound give
\eqref{eq:root-stability}.  The contraction estimate follows directly from
the normalized exponential kernel.
\end{proof}

To state precisely what the local expansion does and does not imply, set
\[
 g_a(x)=\frac{u_m^a(t_0+\sigma x)}{u_m^a(t_0)}
 \bm{1}_{x>-t_0/\sigma},\qquad
 g_2(x)=e^{-x^2/2},\qquad
 g_3(x)=e^{-x^2/2}(1+\gamma_3x^3).
\]
Let $t_a=t_0+\sigma y_a$ be the peak of $U_\ell^a$.  Fix $Y>0$.  For
$j=2,3$, let $I\subset[-Y,Y]$ contain $y_a$ and $y_j$, and assume the root is
nondegenerate there with $|\Phi_{g_a}'|\geq m_j>0$.  If, in addition, the
full convolution residuals satisfy
\begin{equation}
 \sup_{y\in I}|\Phi_{g_a}(y)-\Phi_{g_2}(y)|
 \leq C_Y\widehat\lambda^{-1/2}(1+Y^3),
 \quad
 \sup_{y\in I}|\Phi_{g_a}(y)-\Phi_{g_3}(y)|
 \leq C_Y\widehat\lambda^{-1}(1+Y^6),
 \label{eq:convolution-residual-assumption}
\end{equation}
where $C_Y$ and $m_j$ may depend on $Y$, the fixed lifetime ratio $\rho$, and
the admissible compact parameter set, but not on $\widehat\lambda$.  Then
Proposition~\ref{prop:peak-stability} gives the conditional orders
\begin{equation}
 \frac{|t_a-\tpeak^{(2)}|}{\sigma}
 \leq \frac{C_Y}{m_2}\widehat\lambda^{-1/2}(1+Y^3),
 \qquad
 \frac{|t_a-\tpeak^{(3)}|}{\sigma}
 \leq \frac{C_Y}{m_3}\widehat\lambda^{-1}(1+Y^6).
 \label{eq:formal-error-orders}
\end{equation}
For the Gaussian model, for example,
$\Phi_{g_2}'(y_2)=y_2e^{-y_2^2/2}>0$.  Proposition~\ref{prop:local-orders}
supplies the displayed approximation rates on $|x|\leq Y$, but it does not by
itself prove \eqref{eq:convolution-residual-assumption}: $\Phi_g(y)$ depends on
the entire incoming tail $x\leq y$.  A global theorem must additionally control
the early-time tail, the replacement of $-t_0/\sigma$ by $-\infty$, and root
nondegeneracy.  Equation~\eqref{eq:formal-error-orders} is therefore explicitly
conditional, rather than a claimed consequence of a merely local Taylor
expansion.
Simplicity of $t_a$ for each fixed configuration, established in
Section~\ref{sec:method}, does not supply a lower bound $m_j$ independent of
$\widehat\lambda$ throughout the comparison interval.  In particular,
neither the residual estimate nor such a derivative bound is proved here
uniformly over a compact range of positive lifetime ratios.  The full-model
accuracy reported below is a separate numerical result.

The total discrepancy from the full model separates as
\begin{equation}
 \tpeak^{\mathrm{full}}-\tpeak^{(j)}
 =\underbrace{(\tpeak^{\mathrm{full}}-t_a)}_{\text{transport-profile error}}
 +\underbrace{(t_a-\tpeak^{(j)})}_{\text{local shape error}},
 \qquad j=2,3.
 \label{eq:error-decomposition}
\end{equation}
This decomposition is used in Section~\ref{sec:numerics} to identify why the
quadratic error grows with lifetime.

\subsection{Evaluation algorithm}

For a given source--detector pair and lifetime, the reduced calculation is:

\begin{enumerate}[1.]
 \item compute $\lambda$, $k$, and $q$ from \eqref{eq:lambda-k-q};
 \item solve $P(t_0)=0$ and evaluate $\sigma=(-P'(t_0))^{-1/2}$;
 \item set $\rho=\ell/\sigma$.  For sufficiently small $\rho$, evaluate $y_2$
 from \eqref{eq:small-rho}; otherwise use
 \eqref{eq:quadratic-formula};
 \item for the cubic result, compute $\gamma_3$.  For sufficiently small
 $\rho$, evaluate $y_3$ from \eqref{eq:cubic-small-rho}; otherwise solve
 $F(y,\gamma_3;\rho)=0$ with a bracketing root finder on the positive
 half-line.  Start with $[0,\max\{1,y_2+1\}]$ and repeatedly double the upper
 endpoint until $F>0$ there.
\end{enumerate}

Only scalar root solves are needed; no time stepping and no temporal
convolution are required.  Theorem~\ref{thm:cubic} guarantees termination of
the positive-root bracketing procedure when $0\leq\gamma_3<1/2$.  The supplied
code uses the small-$\rho$ series for $0<\rho\leq0.02$ and the closed balance
functions otherwise; this avoids cancellation in both the quadratic and cubic
evaluations.  The threshold is a floating-point implementation choice, not a
physical regime boundary.

\section{Connection with short- and long-lifetime asymptotics}
\label{sec:limits}

The quadratic mode equation provides a single reduced description across
the lifetime transition.  Its endpoint expansions are compared below with
the regime-specific approximations in
\eqref{eq:previous-endpoint-formulas} under the corresponding joint
large-distance scalings.
Figure~\ref{fig:master-curve} shows the normalized delay
$y_2=(\tpeak^{(2)}-t_0)/\sigma$ as a function of $\rho$.

\begin{figure}[!htbp]
 \centering
 \includegraphics[width=0.82\textwidth]{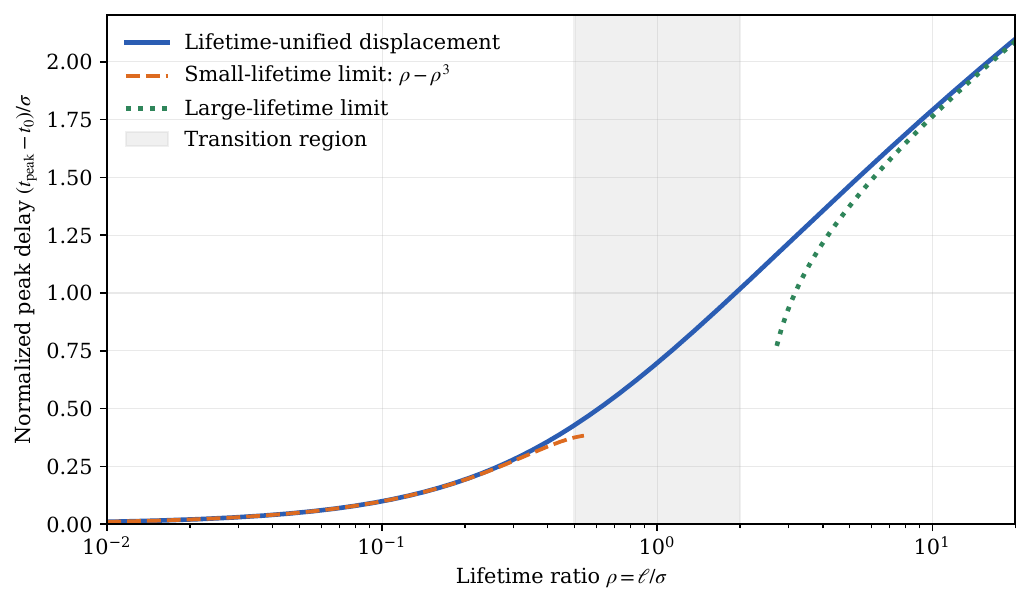}
 \caption{Dimensionless peak displacement of the quadratic reduced model.
 The solid curve shows the lifetime-unified displacement.  The dashed and
 dotted curves show the short-lifetime approximation $\rho-\rho^3$ and the
 long-lifetime approximation
 $\rho^{-1}+\sqrt{2\log(\rho/\sqrt{2\pi})}$, respectively, plotted only for
 $\rho\leq0.55$ and $\rho\geq2.7$.  The shaded band illustrates the transition
 region; its boundaries and the plotting cutoffs are illustrative, not
 rigorously established validity boundaries}
 \label{fig:master-curve}
\end{figure}

For $z\to+\infty$, the standard expansion of $\erfcx$ is
\cite{OlverEtAl2010}
\[
 \erfcx(z)=\frac1{\sqrt\pi z}
 \left(1-\frac1{2z^2}+\frac3{4z^4}
 -\frac{15}{8z^6}+\frac{105}{16z^8}+O(z^{-10})\right).
\]
Substituting this expansion into the quadratic mode equation and matching
powers of $\rho$ gives
\begin{equation}
 y_2=\rho-\rho^3+4\rho^5-27\rho^7+O(\rho^9),
 \qquad \rho\to0,
 \label{eq:small-rho}
\end{equation}
and hence
\begin{equation}
 \tpeak^{(2)}
 =t_0+\ell-\frac{\ell^3}{\sigma^2}
 +4\frac{\ell^5}{\sigma^4}
 -27\frac{\ell^7}{\sigma^6}
 +O\!\left(\frac{\ell^9}{\sigma^8}\right).
 \label{eq:small-lifetime}
\end{equation}
The leading lifetime delay is $\ell$, recovering the leading lifetime-dependent term in the first line of
\eqref{eq:previous-endpoint-formulas} and the derivative-expansion result of
\cite{ChenEtAl2026IPI}, while the sharper intrinsic regime criterion is
$\ell\ll\sigma$.

\begin{samepage}
For $\rho>\sqrt{2\pi}$, put
$a=(y_2-\rho^{-1})/\sqrt2>0$ and
$L_\rho=\log(\rho/\sqrt{2\pi})>0$. The reflection identity
$\erfcx(-a)=2e^{a^2}-\erfcx(a)$ and the quadratic mode equation give
\[
 0<a^2-L_\rho
 =\log\!\left(1+\frac{\erfcx(a)}{\sqrt{2/\pi}\,\rho}\right)
 <\frac{1}{\sqrt2\,\rho a},
\]
where $0<\erfcx(a)<1/(\sqrt\pi\,a)$ follows directly from its integral
representation. In particular, $a>\sqrt{L_\rho}$, and therefore
\[
 0<y_2-\rho^{-1}-\sqrt{2L_\rho}
 =\frac{\sqrt2\,(a^2-L_\rho)}{a+\sqrt{L_\rho}}
 <\frac{1}{2\rho L_\rho}.
\]
Consequently,
\begin{equation}
 y_2=\frac1\rho+
 \sqrt{2\log\!\left(\frac{\rho}{\sqrt{2\pi}}\right)}
 +O\!\left(\frac{1}{\rho\log\rho}\right),
 \qquad \rho\to\infty.
 \label{eq:large-rho}
\end{equation}
\end{samepage}
The remainder is thus $o(\rho^{-1})$, so the displayed $\rho^{-1}$ term is
asymptotically resolved. Returning to dimensional time gives
\begin{equation}
 \tpeak^{(2)}
 =t_0+\frac{\sigma^2}{\ell}
 +\sqrt2\,\sigma
 \sqrt{\log\!\left(\frac{\ell}{\sqrt{2\pi}\sigma}\right)}
 +O\!\left(\frac{\sigma^2}{\ell\log(\ell/\sigma)}\right),
 \qquad \frac{\ell}{\sigma}\to\infty.
 \label{eq:large-lifetime}
\end{equation}
The comparison with the earlier long-lifetime formula is a joint
large-distance and lifetime comparison.  With $\kappa=k-\ell^{-1}$, the
earlier approximation assumes
\begin{equation}
 \kappa>0,\qquad
 \ell>\sqrt\pi\,\kappa^{-3/4}\lambda^{1/2};
 \label{eq:previous-long-lifetime-conditions}
\end{equation}
see \cite[Definition~2.6 and Theorem~2.7]{ChenEtAl2026IPI}.  The square-root
logarithmic correction in \cite[Theorem~2.7]{ChenEtAl2026IPI} can be written
explicitly as
\[
 \kappa^{-3/4}\alpha_\lambda\lambda^{1/2}
 =\frac{\lambda^{1/2}}{\kappa^{3/4}}
 \sqrt{\log\!\left(
 \frac{\ell\kappa^{3/4}}{\sqrt{\pi}\lambda^{1/2}}
 \right)}.
\]
Thus the coefficient $\alpha_\lambda$ already contains the logarithmic
dependence displayed in \eqref{eq:previous-endpoint-formulas}.
The large-distance relations
$t_0=\lambda/\sqrt{k}+O(1)$ and
$\sigma^2=\lambda(2k^{3/2})^{-1}
(1+O(\widehat\lambda^{-1}))$ supply the first two terms in
\[
 \frac{\lambda}{\sqrt{k-\ell^{-1}}}
 =\frac{\lambda}{\sqrt{k}}
  +\frac{\lambda}{2k^{3/2}\ell}
  +O\!\left(\frac{\lambda}{k^{5/2}\ell^2}\right),
 \qquad k\ell\to\infty.
\]
Thus only these displayed distance-dependent terms, rather than the full
quantities without remainder control, are being identified.  Likewise,
\[
 \sqrt2\,\sigma
 \sqrt{\log\!\left(\frac{\ell}{\sqrt{2\pi}\sigma}\right)}
 \sim \frac{\lambda^{1/2}}{k^{3/4}}
 \sqrt{\log\!\left(
 \frac{\ell k^{3/4}}{\sqrt{\pi}\lambda^{1/2}}
 \right)}.
\]
In the joint regime \eqref{eq:joint-long-lifetime}, $k\ell\to\infty$ and
$\kappa/k=1-(k\ell)^{-1}\to1$.  Consequently, the displayed correction
from \cite[Theorem~2.7]{ChenEtAl2026IPI} has the same leading form; see
also \cite{ChenEtAl2026Global}.  This comparison identifies the leading
structures, without asserting equality of the full expansions or their
remainder orders.  The contribution of the reduced mode equation is thus
to retain the previously obtained endpoint structures within one
description organized by $\rho$, rather than to introduce a new
logarithmic term.

Because $y_2=O(\sqrt{\log\rho})$ at large $\rho$, a convenient quadratic
validity indicator is
\begin{equation}
 \widehat\lambda^{-1/2}(1+|y_2|^3)\ll1,
 \quad\text{or approximately}\quad
 \frac{(\log\rho)^{3/2}}{\widehat\lambda^{1/2}}\ll1.
 \label{eq:validity-indicator}
\end{equation}
The cubic approximation removes this leading skewness contribution, but its
next-order validity still deteriorates for a shallow target or an extremely
large lifetime ratio.
At the level of the local profile expansion, a compatible joint limit for
the long-lifetime comparison is
\begin{equation}
 \widehat\lambda\to\infty,\qquad \rho\to\infty,\qquad
 \frac{(\log\rho)^3}{\widehat\lambda}\to0.
 \label{eq:joint-long-lifetime}
\end{equation}
This condition keeps the quadratic validity indicator small.  It is not,
by itself, a bound on the full convolution residual in
\eqref{eq:convolution-residual-assumption} or on the full-response peak error.

The fixed-geometry limit is different.  If the geometry and optical
parameters are held fixed, then $q>0$ and
$u_m^a(t)\sim A_\infty t^{-7/2}e^{-kt}$ as $t\to\infty$, with
$A_\infty=C_0x_{c3}^2/q^2>0$.  Put
$M_a=\int_0^\infty u_m^a(s)\,ds$.  Comparing $U_\ell^a-u_m^a$ first at any
fixed positive time and then at $2k^{-1}\log(k\ell)$ shows that
$t_a\to\infty$ and $t_a=O(\log(k\ell))$.  Hence $t_a/\ell\to0$, and peak
balance gives $u_m^a(t_a)\sim M_a/\ell$.  Taking logarithms yields
\begin{equation}
 t_a\sim k^{-1}\log(k\ell),\qquad
 \ell\to\infty\quad\text{with geometry and optical parameters fixed.}
 \label{eq:fixed-geometry-long-lifetime}
\end{equation}
This logarithmic growth differs from the square-root logarithmic Gaussian
delay in \eqref{eq:large-lifetime}.  Thus lifetime unification refers to a
single reduced construction and its joint-regime endpoint matching; it is
not an assertion of asymptotic accuracy at arbitrarily large lifetime for a
fixed geometry.

\FloatBarrier
\section{Numerical methods and results}
\label{sec:numerics}

\subsection{Reference computation and reproducibility}

The baseline parameters, representative of the values used in
\cite{ChenEtAl2026IPI}, are
\[
 v=0.219\ \mathrm{mm/ps},\qquad
 D=\frac13\ \mathrm{mm},\qquad
 \mu_a=0.1\ \mathrm{mm}^{-1},\qquad
 \beta=0.5493\ \mathrm{mm}^{-1},
\]
with
\[
 x_s=(6,10,0),\qquad x_d=(14,10,0),\qquad
 x_c=(10,10,20)\ \mathrm{mm}.
\]
They give
$t_0=457.1005\,\mathrm{ps}$,
$\sigma=95.3608\,\mathrm{ps}$, and
$\gamma_3=0.1080$.

The full reference first evaluates \eqref{eq:full-zero-response}, then applies
the exponential convolution \eqref{eq:lifetime-convolution}.  In the baseline
lifetime sweep, the time step is $0.1\,\mathrm{ps}$ on
$[0,2500]$ ps, the integral after the substitution $s=tx$ is evaluated with a
240-point Gauss--Legendre rule, and each discrete maximum
is refined by three-point parabolic interpolation.  In the one-at-a-time
centered-geometry parameter sweep, the time step is $0.25\,\mathrm{ps}$ and the
quadrature order is 180.  Repeating all 104 reference-peak calculations with
$h=0.125\,\mathrm{ps}$ and quadrature order 240 changes any peak by at most
$2.13\times10^{-4}\,\mathrm{ps}$; the maximum relative change is
$5.56\times10^{-5}\%$.

For reproducibility, the exponential convolution on a uniform grid
$t_n=nh$ is updated exactly for the piecewise-linear interpolant of its input.
With $r=h/\ell$, $a=e^{-r}$,
\[
 A=\frac{1-a}{r}-a,\qquad B=1-\frac{1-a}{r},
\]
the recurrence is
\begin{equation}
 U_n=aU_{n-1}+A u_{n-1}+B u_n,
 \qquad U_0=0.
 \label{eq:convolution-recurrence}
\end{equation}
The implementation evaluates $1-a$ with \texttt{expm1}; for $\ell=0$ it sets
$U_n=u_n$.  Reported relative errors use
$100|t_{\rm approx}-t_{\rm full}|/t_{\rm full}$, and the 95th percentile uses
linear interpolation between ordered samples.  The
\path{anc/reproduce_results.py} script supplied with this preprint generates
\texttt{baseline\_results.csv}, \texttt{parameter\_sweep\_results.csv}, and
\texttt{aggregate\_summary.csv} from these specifications; it also writes
\texttt{refinement\_check.csv} for the reported grid-refinement test,
\texttt{signed\_error\_diagnostics.csv} for the signed decomposition in
\eqref{eq:error-decomposition}, and regenerates all five PDF figures. The artwork is generated with Matplotlib.

\FloatBarrier

\subsection{Baseline lifetime sweep}

Figure~\ref{fig:quadratic-full} compares the quadratic formula with the full
response for $\ell=0,100,\ldots,1000\,\mathrm{ps}$.  The formula follows the
monotone lifetime trend but increasingly underestimates the peak after the
transition region.

\begin{figure}[!htbp]
 \centering
 \includegraphics[width=0.88\textwidth]{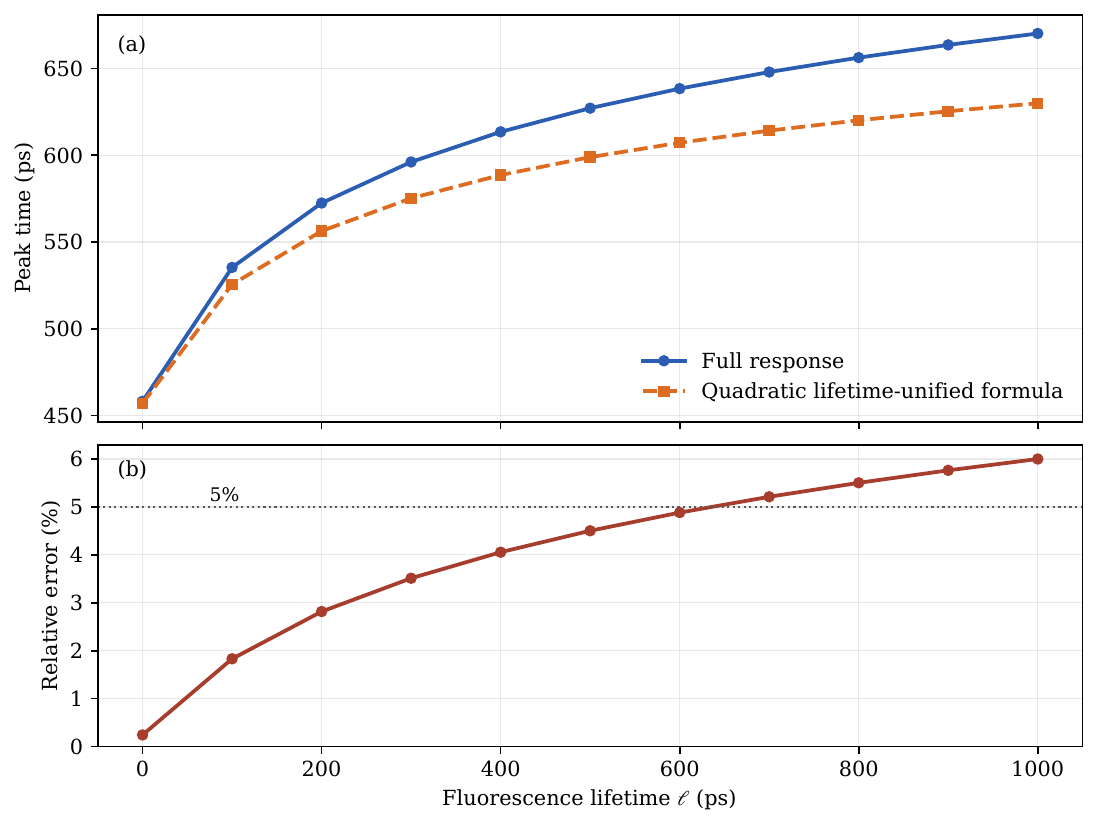}
 \caption{Quadratic lifetime-unified approximation and the full response in
 the baseline configuration. Panel (a) shows the peak times; panel (b) shows the absolute relative error}
 \label{fig:quadratic-full}
\end{figure}

Table~\ref{tab:baseline-errors} summarizes the baseline errors.
The maximum error is $5.999\%$ and the mean error is $4.028\%$.  The cubic
term corrects the systematic late-time bias (Figure~\ref{fig:quadratic-cubic})
and reduces the maximum and mean errors to $0.468\%$ and $0.213\%$,
respectively.

\begin{figure}[!htbp]
 \centering
 \includegraphics[width=0.82\textwidth]{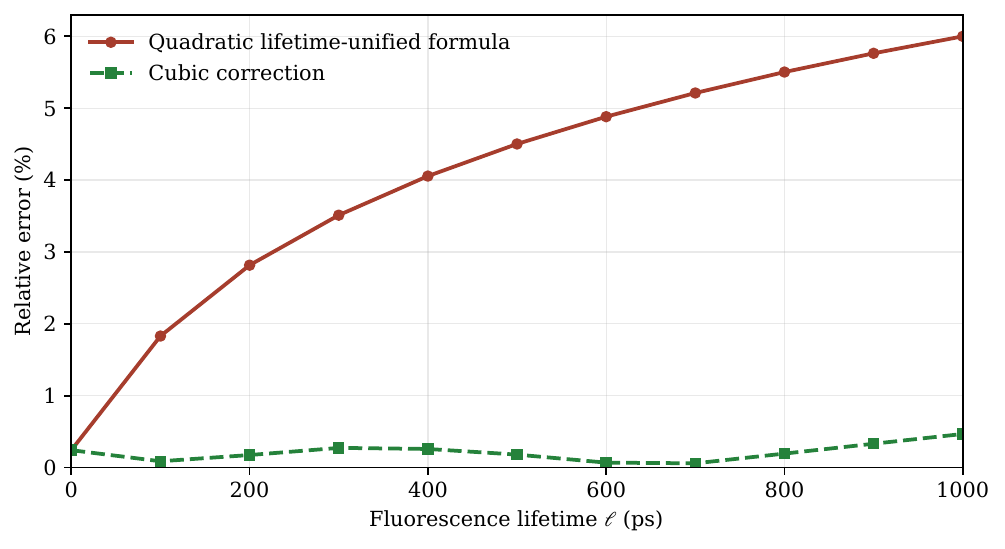}
 \caption{Relative peak-time errors of the quadratic and cubic formulas in
 the baseline lifetime sweep}
 \label{fig:quadratic-cubic}
\end{figure}

\begin{table}[!htbp]
 \caption{Baseline errors over
 $\ell=0,100,\ldots,1000\,\mathrm{ps}$.\label{tab:baseline-errors}}
 \centering
 \small
 \begin{tabular*}{0.88\textwidth}{@{\extracolsep\fill}lcc@{}}
  \toprule
  Approximation & Mean error (\%) & Maximum error (\%)\\
  \midrule
  Quadratic formula & 4.028 & 5.999\\
  Cubic formula & 0.213 & 0.468\\
  Direct convolution of $u_m^a$ & 0.242 & 0.251\\
  \bottomrule
 \end{tabular*}
\end{table}

\FloatBarrier
\subsection{Where the error comes from}

Figure~\ref{fig:error-decomposition} evaluates the two terms of
\eqref{eq:error-decomposition}.  In the baseline lifetime sweep, the relative
peak-time error introduced by replacing the full zero-lifetime response
with $u_m^a$ remains close to $0.25\%$.  Its weak lifetime dependence contrasts
with the growing quadratic local-shape error, which increases steadily with
$\rho$ and reaches more than $5\%$.
The cubic local-shape difference is nonmonotone and remains below $0.52\%$;
after its intermediate-ratio maximum, it decreases before rising modestly at
the final two ratios.  The curves are componentwise absolute differences,
so they are diagnostic rather than an additive partition of absolute errors.

\begin{figure}[!htbp]
 \centering
 \includegraphics[width=0.84\textwidth]{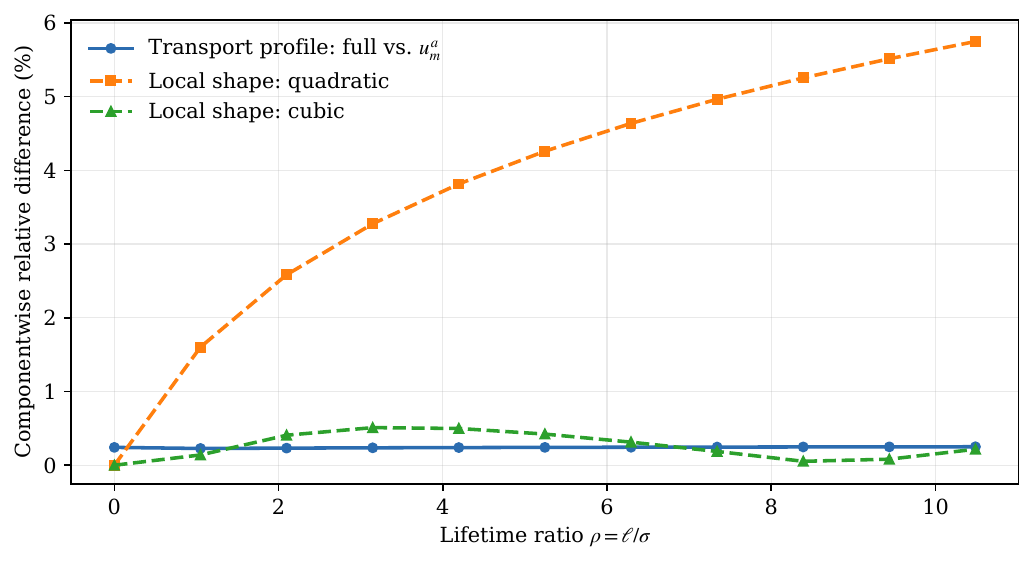}
 \caption{Componentwise error diagnostic for the baseline configuration.  The
 transport curve compares the full response with the direct convolution of
 $u_m^a$; the local-shape curves compare that convolution with the quadratic
 and cubic formulas}
 \label{fig:error-decomposition}
\end{figure}

This result supports the mathematical interpretation: the growth of the
quadratic error is primarily caused by applying a symmetric Gaussian farther
into the skewed right side of $u_m^a$, not by using the wrong lifetime scale.

\FloatBarrier
\subsection{One-at-a-time optical and centered-geometry sweep}

To test whether the baseline result is isolated, we varied one parameter at a
time around the baseline:
\[
 x_{c3}\in\{10,15,20,30,40\}\ \mathrm{mm},\quad
 \mu_a\in\{0.05,0.10,0.15,0.20\}\ \mathrm{mm}^{-1},
\]
\[
 |x_d-x_s|\in\{4,8,12,16\}\ \mathrm{mm},\quad
 \beta\in\{0.25,0.5493,1.0\}\ \mathrm{mm}^{-1}.
\]
After duplicate baseline cases are removed, this gives 13 configurations.
For a separation $d$, the source and detector were placed at
$x_s=(10-d/2,10,0)$ and $x_d=(10+d/2,10,0)$, while
$x_c=(10,10,x_{c3})$.  Thus the target remains directly beneath the
source--detector midpoint: the sweep tests depth and separation within the
balanced centered family, not lateral off-center placement.  Each
configuration was tested at
$\rho\in\{0,0.5,1,2,4,6,8,10\}$ by setting $\ell=\rho\sigma$ separately for
that configuration, for 104 comparisons with the full response.

Table~\ref{tab:sweep-errors} reports the aggregate errors over these comparisons.

\begin{table}[!htbp]
 \caption{Aggregate errors in the 104-pair parameter sweep.
 \label{tab:sweep-errors}}
 \centering
 \small
 \begin{tabular*}{0.94\textwidth}{@{\extracolsep\fill}lccc@{}}
  \toprule
  Quantity & Mean (\%) & 95th percentile (\%) & Maximum (\%)\\
  \midrule
  Full response vs. direct convolution of $u_m^a$ & 0.264 & 0.720 & 0.800\\
  Quadratic formula vs. full response & 3.280 & 7.364 & 10.389\\
  Cubic formula vs. full response & 0.234 & 0.672 & 1.697\\
  \bottomrule
 \end{tabular*}
\end{table}

Figure~\ref{fig:sweep-heatmap} shows all cases.  The quadratic error increases
with $\rho$ in every configuration and is largest for the shallow
$x_{c3}=10\,\mathrm{mm}$ target.  The cubic correction remains below $1\%$ in
102 of the 104 cases; its largest error, $1.697\%$, occurs for the shallow
target at $\rho=10$.  That configuration also has the largest
$\gamma_3=0.1502$ and the largest transport-profile error, so both asymptotic
layers are least favorable there.  Increasing depth reduces $\gamma_3$ and
reduces the mean and maximum errors of both approximations over the tested
lifetime ratios.  This aggregate trend is consistent with the local
large-distance analysis, but it is not a numerical proof of the conditional
orders in \eqref{eq:formal-error-orders}.  The absolute cubic error need not
decrease at every fixed ratio: at $\rho=6$, its values for depths 15, 20,
and 30~mm are $0.0161\%$, $0.1020\%$, and $0.1184\%$, respectively.
Across all 13 configurations, $0.0762\leq\gamma_3\leq0.1502$; hence every
cubic computation lies well inside the proven interval
$0\leq\gamma_3<1/2$ of Theorem~\ref{thm:cubic}.

The signed decomposition explains this nonmonotonicity.  Across the 104
pairs, the transport-profile and cubic local-shape errors have opposite
signs in 64 cases.  The mean absolute cubic local-shape difference,
normalized by the full peak time, is $0.217\%$, compared with the $0.264\%$
mean transport-profile error.  For the baseline geometry at $\rho=4$, the
signed components and their sum are
\[
 \underbrace{0.2391\%}_{\text{transport profile}}
 +\underbrace{(-0.5068\%)}_{\text{cubic local shape}}
 \simeq\underbrace{-0.2678\%}_{\text{total}}.
\]
Thus a small total error can partly reflect cancellation, and neither
component is an automatic lower bound for the total error.

\begin{figure}[!htbp]
 \centering
 \includegraphics[width=0.98\textwidth]{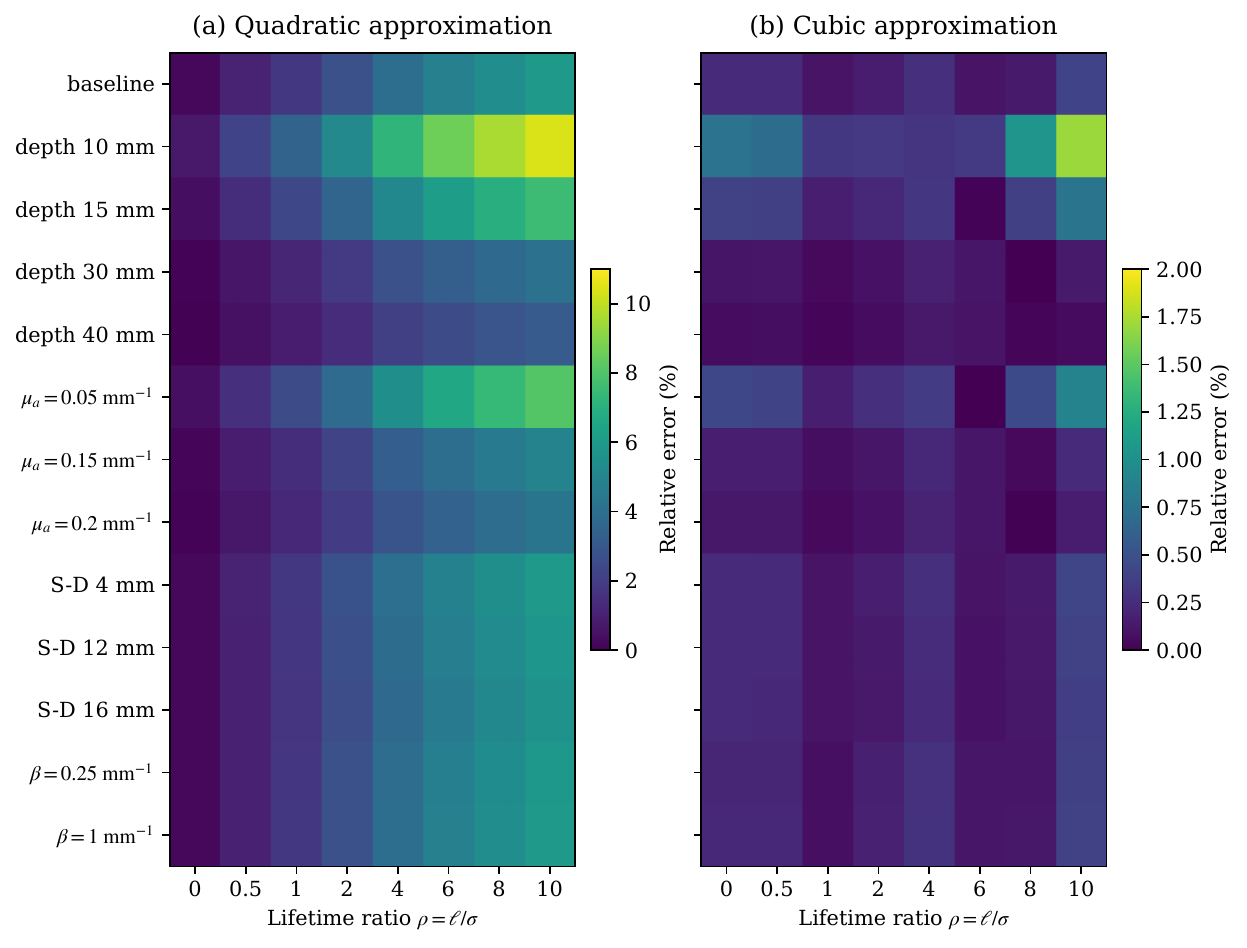}
 \caption{Relative errors for 13 one-at-a-time optical and centered-geometry
 configurations. Each row varies one quantity from the baseline. Panels (a)
 and (b) show the quadratic and cubic errors, respectively. Separate color
 scales are used so that the sub-percent structure of the cubic results
 remains visible}
 \label{fig:sweep-heatmap}
\end{figure}

\clearpage
\section{Discussion}
\label{sec:discussion}

The main physical conclusion is that fluorescence lifetime and photon
transport do not contribute two unrelated delays.  The zero-lifetime response
first establishes a center $t_0$, a width $\sigma$, and a skewness
$\gamma_3$.  The lifetime then acts relative to that width.  This explains why
the same numerical value of $\ell$ can be short in one geometry and long in
another, and why lifetime must be compared with the transport width when
moving between the endpoint regimes.

The quadratic result has two strengths: it is explicit, and it exposes this
dimensionless organization.  Its increasing long-lifetime bias is also
understandable.  An exponential lifetime kernel moves the measured peak into
the right side of the transport profile, where the locally symmetric Gaussian
becomes inaccurate.  The cubic correction measures precisely that first
asymmetry.  Across the 104-case study, the mean cubic error relative to the
full response, $0.234\%$, is comparable to the $0.264\%$ mean error caused by
replacing the full transport response with $u_m^a$.  However, the signed
diagnostic in Section~\ref{sec:numerics} shows that these components often
partly cancel.  The similarity of their aggregate errors therefore does not
establish that transport-profile error dominates the remaining discrepancy.
A quartic local term may help, particularly in the shallowest,
longest-lifetime cases, but its effect on total error must be assessed
together with the signed transport-profile contribution.  Systematic
improvement should address both approximation layers rather than infer a
preferred correction from the total mean error alone.

Several limitations should be kept visible.

\begin{enumerate}[(i)]
 \item The full numerical reference still uses a homogeneous half-space
 diffusion model and a point target.  Heterogeneous backgrounds, finite
 targets, curved boundaries, and radiative-transport effects are not included.
 \item The large-distance approximation $u_m^a$ is one source of error, while
 the quadratic or cubic local truncation is another.  The numerical
 decomposition shows that both worsen for shallow targets.
 \item The instrument response function, finite source pulse, detector timing
 offset, temporal binning, and photon-counting noise are absent.  In an
 experiment, a common timing offset should be calibrated or treated as a
 nuisance parameter, and the peak should be extracted by a local fit rather
 than by a raw maximum bin.
 \item Proposition~\ref{prop:peak-stability} gives a rigorous perturbation
 mechanism, but a global parameter-explicit error theorem for the complete
 response remains open.  Such a theorem would require simultaneous control of
 the large-distance remainder, the lifetime-shifted peak window, and root
 nondegeneracy.
\end{enumerate}

These limitations suggest a concrete validation sequence: first compare with
finite-element or Monte Carlo forward simulations, then convolve with measured
instrument-response functions, and quantify peak-picking uncertainty under
Poisson noise. The current formulas provide a fast and interpretable model
for carrying out those studies.

\section{Conclusion}
\label{sec:conclusion}

A single dimensionless ratio, $\rho=\ell/\sigma$, organizes the effect of
fluorescence lifetime on the peak of a time-domain diffuse-light response
from a fluorescent point target.  The center $t_0$ and width $\sigma$ are determined by the
zero-lifetime transport profile, while a universal mode displacement describes
the lifetime delay.  The resulting quadratic formula
\eqref{eq:quadratic-formula} defines one reduced prediction across short,
transitional, and long lifetimes without introducing an external lifetime
cutoff.  Its endpoint expansions match the leading structures of the
previous approximations in their joint large-distance regimes.  This
matching does not extend to the fixed-geometry limit
\eqref{eq:fixed-geometry-long-lifetime}.

The leading limitation of the quadratic formula is local symmetry.  A cubic
correction based on $\gamma_3$ retains the lifetime-unified structure and
requires only a scalar root connected to the quadratic mode.  It also
reproduces the short-lifetime expansion of the exact reduced-profile
convolution through order $\ell^2$.  It reduces the mean error from
$3.280\%$ to $0.234\%$ in a 104-pair one-at-a-time optical and
centered-geometry study, with the largest remaining error occurring in the
shallowest, longest-lifetime configuration.  On average, the remaining cubic
error is comparable to the error of the underlying transport-profile
approximation, but signed errors partly cancel in many cases.  The error
decomposition and conditional stability estimate therefore motivate
assessing both the local expansion and $u_m^a$ when pursuing further
systematic improvement.

Further work should establish parameter-explicit error control and assess
the effects of instrumental response and photon-counting noise. Applications
to inverse imaging require a separate analysis of identifiability and
reconstruction performance.

\section*{Statements and Declarations}

\subsection*{Funding}
This work was supported by the Japan Society for the Promotion of Science
(JSPS), KAKENHI Grant Number JP25K17291.

\subsection*{Competing interests}
The author declares no competing interests.

\subsection*{Author contributions}
Junyong Eom: Conceptualization; Methodology; Formal analysis; Software;
Validation; Visualization; Writing---original draft; Writing---review and
editing.

\subsection*{Data and code availability}
The Python code and synthetic numerical data supporting this study are supplied as ancillary files with this preprint. The script \path{anc/reproduce_results.py} generates the numerical results, and \path{anc/render_figures_from_csv.py} renders the figures from the included data. Package requirements and usage instructions are provided in \path{anc/README.txt}. No experimental or personal data were used.

\subsection*{Use of generative AI}
During the preparation of this manuscript, the author used ChatGPT (OpenAI) to assist with drafting and revising parts of the text, developing and checking Python code, and checking selected analytical derivations. The author reviewed and revised all AI-assisted material, independently reproduced the numerical results, and takes full responsibility for the final manuscript.

\subsection*{Ethics approval}
This study uses mathematical analysis and synthetic numerical data only;
ethics approval is not applicable.

\section*{Ancillary files}
Python reproduction code, synthetic numerical data, computational environment information, and usage instructions are included in the \path{anc/} directory accompanying this preprint.

\bibliographystyle{unsrtnat}
\bibliography{uniform_peak_time_references}

\end{document}